    \crefname{model}{Problem}{Problems}
\tikzset{>=stealth',inner sep=0pt,outer sep=2pt}
\newtheorem{theorem}{Theorem}
\newtheorem{corollary}[theorem]{Corollary}
\newtheorem{lemma}[theorem]{Lemma}
\theoremstyle{definition}
\newtheorem{definition}{Definition}
\newtheorem{prop}{Proposition}
\newcommand{\pderiv}[2]{\tfrac{\partial #1}{\partial #2}}
\newcommand\ztag[1]{%
\def\@currentlabel{#1}%
\gdef\tmp{%
\addtocounter{equation}{-1}%
\def\theequation{#1}}%
\aftergroup\aftergroup\aftergroup\aftergroup\aftergroup\aftergroup
\aftergroup\aftergroup\aftergroup\aftergroup\aftergroup\aftergroup
\aftergroup\aftergroup\aftergroup\aftergroup\aftergroup\aftergroup
\aftergroup\aftergroup\aftergroup\aftergroup\aftergroup\aftergroup
\aftergroup\aftergroup\aftergroup\aftergroup\aftergroup\aftergroup
\aftergroup
\tmp}
\newcommand{\pushright}[1]{\ifmeasuring@#1\else\omit\hfill$\displaystyle#1$\fi\ignorespaces}
\newcommand{\pushleft}[1]{\ifmeasuring@#1\else\omit$\displaystyle#1$\hfill\fi\ignorespaces}
\newcommand\getcurrentref[1]{%
 \ifnumequal{\value{#1}}{0}
  {??}
  {\the\value{#1}}%
} 
\renewenvironment{framed}{%
}{%
    \ignorespacesafterend%
}
\let\Chaptermark\chaptermark
\def\chaptermark#1{\def\Chaptername{#1}\Chaptermark{#1}}
\let\Sectionmark\sectionmark
\def\sectionmark#1{\def\Sectionname{#1}\Sectionmark{#1}}
\let\Subsectionmark\subsectionmark
\def\subsectionmark#1{\def\Subsectionname{#1}\Subsectionmark{#1}}
\let\Subsubsectionmark\subsubsectionmark
\def\subsubsectionmark#1{\def\Subsubsectionname{#1}\Subsubsectionmark{#1}}
\definecolor{capri}{rgb}{0.0, 0.75, 1.0}
\definecolor{cherryblossompink}{rgb}{1.0, 0.72, 0.77}
\newcommand{\R}{\mathbb R}
\newcommand{\brObj}{\mathscr{F}}
\newcommand{\caseZero}{\textbf{A}}
\newcommand{\caseZeroA}{\textbf{A1}}
\newcommand{\caseZeroB}{\textbf{A2}}
\newcommand{\caseZeroMinus}{\textbf{A$\boldsymbol{-}$}}
\newcommand{\caseOne}{\textbf{B}}
\newcommand{\caseTwoA}{\textbf{C1}}
\newcommand{\caseTwoB}{\textbf{C2}}
\newcommand{\caseTwoBMinus}{\textbf{C2$\boldsymbol{-}$}}
\newcommand{\caseTwoBPlus}{\textbf{C2$\boldsymbol{+}$}}
\newcommand{\caseTwoC}{\textbf{C3}}
\newcommand{\kspace}{ \hyperref[eq:k-space]{($k$-SNK)} }
\begin{document}

\title{Continuous Equality Knapsack with Probit-Style Objectives\thanks{J. Fravel and R. Hildebrand were partially funded by ONR Grant N00014-20-1-2156.  Any opinions, findings, and conclusions or recommendations expressed in this material are those of the authors and do not necessarily reflect the views of the Office of Naval Research.  R. Hildebrand was also partially funded by the 2020-2021 Policy+ grant from Virginia Tech.}}
\author[1]{Jamie Fravel}
\author[1]{Robert Hildebrand\footnote{Corresponding author: \url{rhil@vt.edu}}}
\author[2]{Laurel Travis}

\affil[1]{Grado Department of Industrial and Systems Engineering, Virginia Tech, Blacksburg, Virginia, USA}
\affil[2]{Business Information Technology, Virginia Tech, Blacksburg, Virginia, USA}

\date{}
\maketitle

\begin{abstract}%
    We study continuous, equality knapsack problems with uniform separable, non-convex objective functions that are continuous, antisymmetric about a point, and have concave and convex regions.  For example, this model captures a simple allocation problem with the goal of optimizing an expected value where the objective is a sum of cumulative distribution functions of identically distributed normal distributions (i.e., a sum of inverse probit functions).  We prove structural results of this model under general assumptions and provide two algorithms for efficient optimization: (1) running in linear time and (2) running in a constant number of operations given preprocessing of the objective function.
\end{abstract}

\section{Introduction}%
We study the following restricted version of a nonlinear continuous knapsack problem 
\begin{definition}
    The Symmetric Nonlinear Continuous Knapsack problem is given by
   \begin{equation*} 
   \label{model:01space}
   \tag{SNK}
   \max \left\{ F(\mathbf{x}) = \sum_{i=1}^n f(x_i) :  \sum_{i=1}^n x_i = M , \mathbf{x} \in [0,1]^n \right\}
   \end{equation*}
where $f\colon [0,1]\rightarrow\mathbb{R}$ and $M \in [0,n]$.    
\end{definition}
The variables appear symmetrically in the problem, so any permutation of the variables for a solution yields an equivalent solution.  We make use of this property later in the paper via a reformulation of (\ref{model:01space}).

Our original interest in this model derives from a simple affine transformation to more general bounds on the variables.  In particular, consider the transformation $x_i \mapsto a + (b-a)x_i$ and define $f_0$ via $f_0(a + (b-a)x_i) = f(x)$ and $M_0$ via $M = \frac{M_0-an}{bn-an}$.  This results in a more general version of the problem which we call the $[a,b]$-Symmetric Nonlinear Continuous Knapsack problem and is given by 
   \begin{equation}
   \label{model:xspace}
   \tag{$[a,b]$-SNK}
   \max \left\{ F_0(\mathbf{x}) = \sum_{i=1}^n f_0(x_i) :  \sum_{i=1}^n x_i = M_0 , \mathbf{x} \in [a,b]^n \right\}
   \end{equation}
where $f_0\colon [a,b]\rightarrow\mathbb{R}$, and $a,b,M \in \R$ with $a \leq  b \leq M_0$. Any solution $\mathbf{x}$ of (\ref{model:01space}) can be transformed into a solution $\mathbf{x}'$ of (\ref{model:xspace}) by letting $x_i' = x_i(b-a)+a$. Therefore it suffices to investigate the solutions of (\ref{model:01space}) since all results will naturally extend to (\ref{model:xspace}). This focus will help keep our notation legible.

We make use of the following assumptions on $f$ for our main results: $f$ is antisymmetric about a point $c \in (0,1)$ (i.e. $f(x)-f(c) = f(c)- f\left(c-(x-c)\right)$ for all $x \in [0,1]$), and  $f$ is twice differentiable such that $f''(x) < 0$ for all $x \in (c,1)$.  By extension, we also have $f''(x) > 0$ for all $x \in (0,c)$. In particular, $f$ is strictly convex on $[0,c)$ and strictly concave on $(c,1]$.  See Figure \ref{fig:assumptions} for an example.

Several classes of functions satisfy these assumptions including logistic functions, probit models, tangent and arc-tangent functions, and the cumulative distribution functions (CDFs) of many probability distributions. Such sigmoidal and reverse-sigmoidal functions are used to model many phenomena with applications to marketing \cite{freeland1980s,AGRALI2009605}, target search \cite{zhang2013Newton}, competitive bidding \cite{rothkopf1969competetive}, and nature reserve design \cite{travis2008}. Most research focuses on strictly increasing functions, but our results extend to decreasing and even non-monotonic functions.

    \newcommand{\scaling}{0.3}
    \begin{figure}[h!]
    \centering
        \begin{subfigure}{0.48\textwidth}
        \centering
            \resizebox{0.6\textwidth}{0.6\textwidth}{%
            \begin{tikzpicture}
            \node (img)  {\includegraphics[scale=0.5]{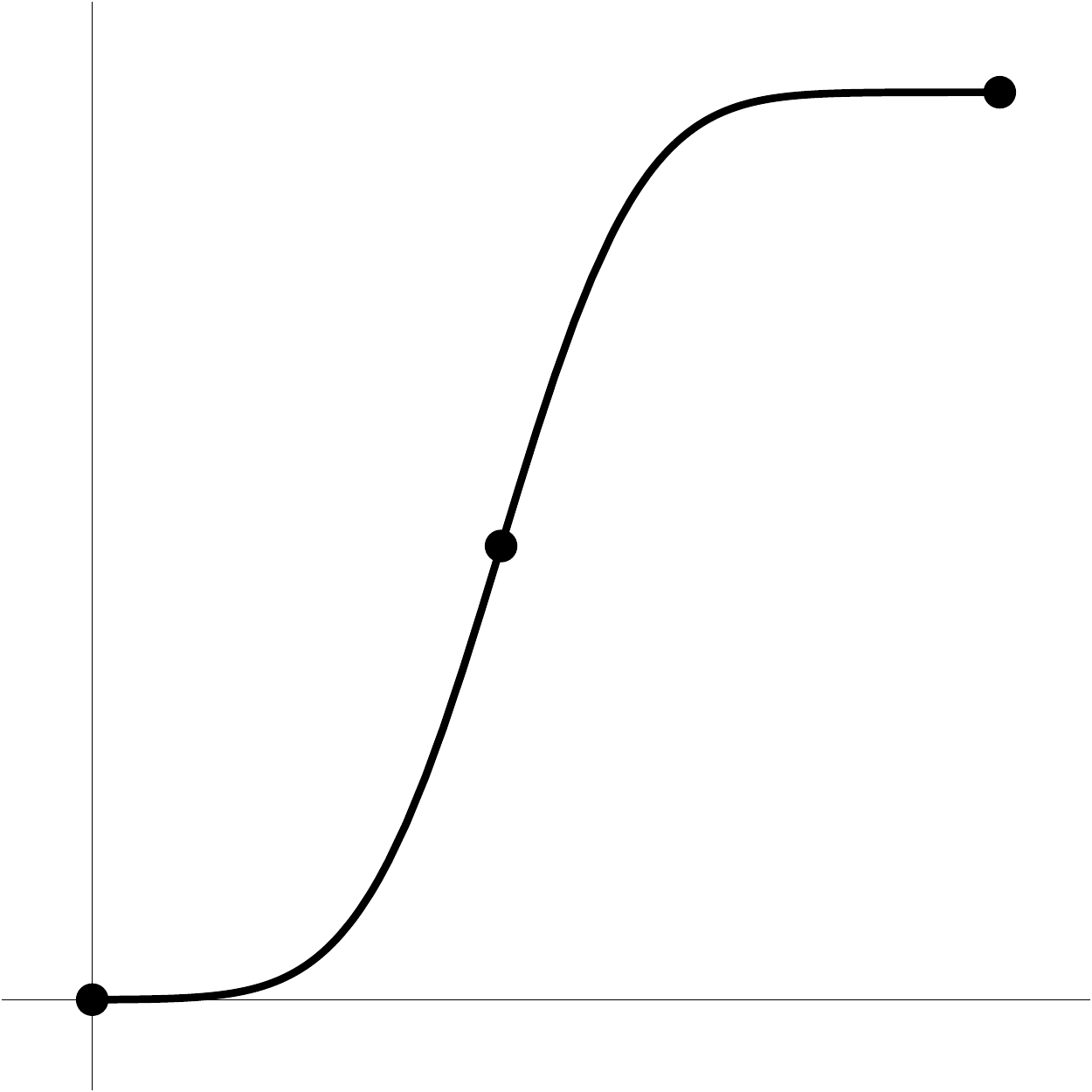}};
            \node[below=of img, node distance=0cm, yshift=1cm] {$r$};
            \node[left=of img, node distance=0cm, anchor=center,xshift=0.8cm] {$f(r)$};
            \node[below=of img, node distance=1cm, yshift=1.5cm,xshift=-3.5cm] {$(0,f(0))$};
            \node[above=of img, node distance=1cm, yshift=-2.2cm,xshift=2.9cm] {$f(1)$};
            \node[below=of img, node distance=1cm, yshift=4.17cm,xshift=0.1cm] {$f(c)$};
            \node[below=of img, node distance=1cm, yshift=1.5cm,xshift=2.65cm] {$1$};
            \node[below=of img, node distance=1cm, yshift=1.75cm,xshift=2.65cm] {$\rule{.4pt}{1ex}$};
            \node[below=of img, node distance=1cm, yshift=1.5cm,xshift=-0.25cm] {$c$};
            \node[below=of img, node distance=1cm, yshift=1.75cm,xshift=-0.25cm] {$\rule{.4pt}{1ex}$};
            \end{tikzpicture}
        }
        \caption{A valid function based on the normal CDF.}
        \end{subfigure}
        ~
        \begin{subfigure}{0.48\textwidth}
        \centering
        \resizebox{0.6\textwidth}{0.6\textwidth}{%
            \begin{tikzpicture}
            \node (img)  {\includegraphics[scale=0.39]{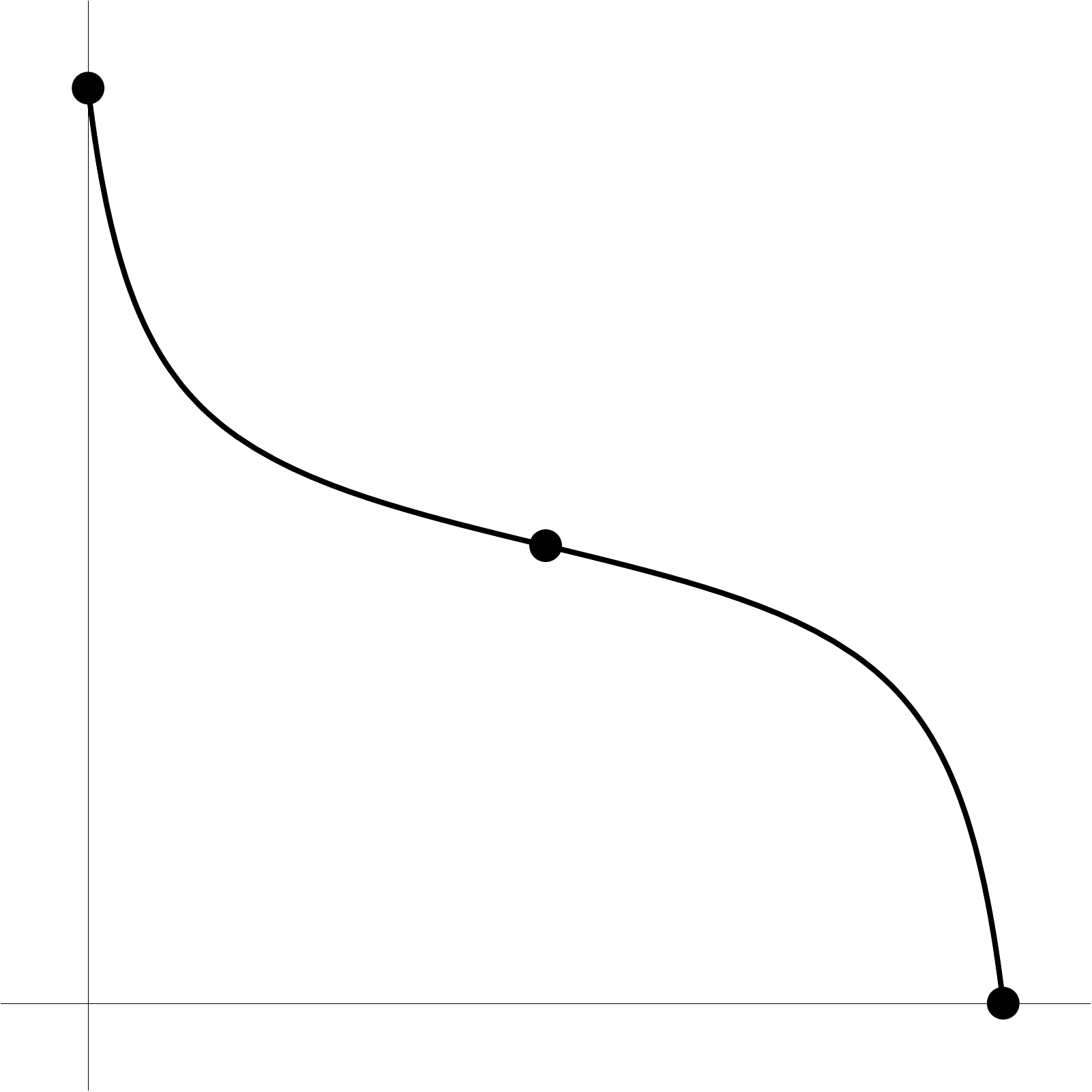}};
            \node[below=of img, node distance=0cm, yshift=1cm] {$r$};
            \node[left=of img, node distance=0cm, anchor=center,xshift=0.8cm] {$f(r)$};
            \node[below=of img, node distance=1cm, yshift=1.5cm,xshift=-2.9cm] {$0$};
            \node[above=of img, node distance=1cm, yshift=-1.9cm,xshift=-3.2cm] {$f(0)$};
            \node[below=of img, node distance=1cm, yshift=4.17cm,xshift=0.1cm] {$f(c)$};
            \node[below=of img, node distance=1cm, yshift=1.5cm,xshift=2.65cm] {$(1,f(1))$};
            \node[below=of img, node distance=1cm, yshift=1.75cm,xshift=2.65cm] {$\rule{.4pt}{1ex}$};
            \node[below=of img, node distance=1cm, yshift=1.5cm,xshift=0cm] {$c$};
            \node[below=of img, node distance=1cm, yshift=1.75cm,xshift=0cm] {$\rule{.4pt}{1ex}$};
            \end{tikzpicture}
        }
        \caption{A valid function based on negative tangent.}
        \end{subfigure}
    \caption{Two example of functions that are twice-differentiable, antisymmetric about $c$, and strictly convex on $[0,c)$.}
    \label{fig:assumptions}
    \end{figure}

These results were inspired by analysis of racial representation in legislative redistricting in the United States by using     \begin{equation*}\label{obj:BlackReps}
    f(r) := \Phi\left(\beta\cdot r-\beta_0\right),
    \end{equation*}
where $\Phi$ is the cumulative distribution function of the standard normal distribution and $r$ is the ratio of black voters to the total population of voters in a given district. After calibrating $\beta$ and $\beta_0$ from historical data, this function predicts the probability that a district will elect a black congressional representative. More details on this work can be seen in \cite{bvap_opt, bvap_southern, political}\footnote{
Although we do not use it in this article, data related to these redistricting problems can be provided if requested.
}.

\paragraph{Prior work}

Knapsack and resource allocation problems are well-studied in the literature, particularly for linear objectives. We mention just a few references that are most related to our work. Perhaps the earliest reference with nonlinear objectives is Derman~\cite{Derman1959}  in 1959.  Derman studies a similar variant to our allocation problem and then handles allocation under random demand.  Derman does not require separability of the objective function.
A similar setup was used under an environmental application in~\cite{travis2008} to identify a starting point for a simulation optimization heuristic with the objective of maximizing the probability that an endangered species persists.
Kodilam and Luss~\cite{Kodialam1998} study the allocation problem while minimizing a separable convex objective.  
Ağralı and Geunes \cite{AGRALI2009605} provide a polynomial time approximation algorithm for knapsack with separable, sigmoid objectives and a similar polynomial exact algorithm in the case of identical separability without assuming antisymmetry. 
Srivastava and Bullo \cite{srivastava2014knapsack} explore a few problems, including the knapsack problem, with sigmoid objectives and incidentally arrive at an $O(n)$ algorithm for solving \eqref{model:01space}.
Hochbaum~\cite{HOCHBAUM1995103} studies a nonlinear integer or continuous knapsack maximization problem with a separable concave objective and a number of variants, providing efficient approximation algorithms for these problems. 
These are perhaps the most clear complexity results for this problem class.
Bitran and Hax~\cite{10.2307/2631335} study the resource allocation problem with a separable convex objective and suggest a recursive procedure to solve these problems.  Bretthaur and Shetty~\cite{10.2307/171693,BRETTHAUER2002505} study a generalization of the nonlinear resource allocation problem with nonlinear constraints.  They present an algorithm based on branch and bound.
For more results and applications of general nonlinear knapsack, see~\cite{PATRIKSSON20081,patriksson2015algorithms} for surveys of the continuous nonlinear resource allocation problem.

\paragraph{Contributions and outline}
Our main contribution (Theorem~\ref{thm:opt_int_sol}) is to show that (\ref{model:01space}) can be solved in a constant number of operations, provided some numerical preprocessing (see Proposition~\ref{prop:Compute_d_0}) is done on the function $f$. We prove these results using only the basic assumptions discussed after (\ref{model:01space}).  We prove a number of structural results along the way, which also imply a simple polynomial time algorithm (Corollary~\ref{cor:poly-time}) to solve (\ref{model:01space}).  

We begin with a KKT analysis of the optimal solutions that leads to the simple linear time algorithm.  We then delve deeper into the structure of optimal solutions through a reformulation of the problem into a MINLP.  Analysis of this problem reveals a finite number of potential optimal solutions to consider, which can be computed given proper preprocessing of the function.  In particular, we need to compute a domain value $d_0$ with specific properties related to $f$.

\section{Structure of KKT Solutions}%
 
Unless stated otherwise, we assume that $M \in [0,n]$.
Note that (\ref{model:01space}) is feasible if and only if $M \in [0,n]$.
Given a feasible solution $\mathbf{x}$ of (\ref{model:01space}), partition $I = \{1,\dots,n\}$ into 
    $$
    I_\mathbf{x}^0 = \{i \in I\ :\ x_i = 0\},\qquad I_\mathbf{x}^1 = \{i \in I\ :\ x_i = 1\}, \quad \text{and}\quad I_\mathbf{x}^y = \{i \in I\ :\ x_i \in (0,1)\}.
    $$
\begin{lemma}\label{lem:lambda}
   Suppose that $f$ is differentiable on $[0,1]$. Let $\mathbf{x}^*$ be an optimal solution of (\ref{model:01space}). There exists a constant $\lambda$ such that
        \begin{enumerate}[(a)]
        \item $f'(x_i^*) = \lambda$ for every $i$ in $I_\mathbf{x*}^y$,              \label{lem:lambda:y}
        \item If $I_{\mathbf{x}^*}^0$ is nonempty, then $f'(0) \leq \lambda$, and   \label{lem:lambda:a}
        \item If $I_{\mathbf{x}^*}^1$ is nonempty, then $\lambda \leq f'(1)$.       \label{lem:lambda:b}
        \end{enumerate}
\end{lemma}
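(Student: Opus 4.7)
The plan is to apply the KKT conditions to (\ref{model:01space}). Every constraint is affine --- the equality $\sum_i x_i = M$ together with the box constraints $0 \le x_i \le 1$ --- so a constraint qualification holds trivially and the KKT conditions are necessary at any optimum of the differentiable objective $F$.

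I would introduce a multiplier $\lambda$ for the equality constraint, nonnegative multipliers $\mu_i$ for $x_i \ge 0$, and nonnegative multipliers $\nu_i$ for $x_i \le 1$. Stationarity of the Lagrangian then reads $f'(x_i^*) = \lambda - \mu_i + \nu_i$, and complementary slackness gives $\mu_i x_i^* = 0$ and $\nu_i(1 - x_i^*) = 0$ for each $i$. Partitioning the coordinates according to the three index sets, the conclusions follow immediately: for $i \in I_{\mathbf{x}^*}^y$ both multipliers vanish, giving $f'(x_i^*) = \lambda$ and proving (a); for $i \in I_{\mathbf{x}^*}^0$ only $\nu_i$ must vanish, yielding $f'(0) = \lambda - \mu_i \le \lambda$ and proving (b); for $i \in I_{\mathbf{x}^*}^1$ only $\mu_i$ must vanish, yielding $f'(1) = \lambda + \nu_i \ge \lambda$ and proving (c).

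The one subtle point, and what I expect to be the only real obstacle, is the edge case $I_{\mathbf{x}^*}^y = \emptyset$, where (a) imposes no condition on $\lambda$ and so the existence of a valid $\lambda$ must be argued separately. In this case $\mathbf{x}^*$ is a $0/1$-vector and $M$ is an integer; any $\lambda \in [f'(0), f'(1)]$ then satisfies (b) and (c) simultaneously. The inequality $f'(0) \le f'(1)$ that guarantees this interval is nonempty follows from a direct exchange argument: transferring a small amount $\epsilon > 0$ of mass from a coordinate equal to $1$ to a coordinate equal to $0$ preserves feasibility, and the resulting first-order change $\epsilon(f'(0) - f'(1))$ in $F$ cannot be positive at an optimum. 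If only one of $I_{\mathbf{x}^*}^0$, $I_{\mathbf{x}^*}^1$ is nonempty, one simply picks $\lambda = f'(0)$ or $\lambda = f'(1)$ respectively, which covers the extremal cases $M = 0$ and $M = n$ as well.
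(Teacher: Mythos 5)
Your proposal is correct and follows essentially the same route as the paper: form the Lagrangian with a multiplier $\lambda$ for the equality constraint and nonnegative multipliers for the box constraints, then read off (a)--(c) from stationarity and complementary slackness. The extra discussion of the case $I_{\mathbf{x}^*}^y=\emptyset$ is harmless but unnecessary, since the KKT theorem already supplies the multiplier $\lambda$ unconditionally and conditions (b) and (c) hold for it whether or not (a) is vacuous.
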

This lemma uses KKT conditions to analyze optimal solutions.  This is a common technique used in the literature applied to variants of this problem. 
 See, for instance, Patriksson and Str\"omberg \cite[Equations (3c-3e)]{patriksson2015algorithms} for a similar derivation on a more general version of the problem. 
\begin{framed}
	\begin{proof}
	    The Lagrangian of (\ref{model:01space}) is given by 
	        \begin{equation*}
			L(\mathbf{x},\boldsymbol{\mu}^0, \boldsymbol{\mu}^1, \lambda) = \sum_{i=1}^n f(x_i) + \sum_{i=1}^n x_i \mu_i^0 + \sum_{i=1}^n (1-x_i) \mu_i^1 + \lambda \left(M-\sum_{i=1}^n x_i\right).
		\end{equation*}
	    Notice that 
	        \begin{equation*}
	        \pderiv{}{x_i}L(\mathbf{x},\boldsymbol{\mu}^0, \boldsymbol{\mu}^1, \lambda) = f'(x_i)+\mu_i^0-\mu_i^1-\lambda.
	        \end{equation*}
	   Since $\mathbf{x}^*$ is optimal, the KKT necessary conditions tell us that, for each $i \in \{1,...,n\}$,
	        \begin{subnumcases}{}
	        f'(x_i^*) = \lambda-\mu_i^0+\mu_i^1   \\
	        x_i^* \mu_i^0 = 0                \label{prf:CompSlacka}\\
	        (1-x_i^*) \mu_i^1 = 0                \label{prf:CompSlackb}\\
	        \boldsymbol{\mu}^0,\boldsymbol{\mu}^1 \geq \boldsymbol{0}.
	        \end{subnumcases}
	    If $i \in I_{\mathbf{x}^*}^y \cup I_{\mathbf{x}^*}^1$, then $x_i^* > 0$ so \eqref{prf:CompSlacka} implies that $\mu_i^0 = 0$. Similarly, if $i \in I_{\mathbf{x}^*}^y \cup I_{\mathbf{x}^*}^0$, then $1-x_i^* > 0$ so \eqref{prf:CompSlackb} implies that $\mu_i^1 = 0$. These conditions reduce to 
	       \begin{subnumcases}{}
            f'(x_i^*)=\lambda,      &$\forall\ i\in I_{\mathbf{x}^*}^y \label{prf:KKTy}$\\
            f'(0)=\lambda-\mu_i^0,  &$\forall\ i\in I_{\mathbf{x}^*}^0 \label{prf:KKTa}$\\
            f'(1)=\lambda+\mu_i^1,  &$\forall\ i\in I_{\mathbf{x}^*}^1 \label{prf:KKTb}$\\
            \boldsymbol{\mu}^0,\boldsymbol{\mu}^1 \geq \boldsymbol{0}.
	        \end{subnumcases}
	    Notice that \eqref{prf:KKTy} is exactly condition (a). Since $\mu_i^0 \geq 0$, \eqref{prf:KKTa} can only be satisfied if $f'(0) \leq \lambda$; similarly, \eqref{prf:KKTb} can only be satisfied if $f'(1) \geq \lambda$.  Thus, we have (b) and (c). Note that \eqref{prf:KKTy}, \eqref{prf:KKTa}, and \eqref{prf:KKTb} only apply if their respective sets are nonempty.
	\end{proof}
\end{framed}

We call a solution $\mathbf{x}$ \textit{trine} if each entry $x_i$ takes one of three possible values. In particular, we use trine to describe solutions for which $x_i \in \{0,y,1\}$ for every $i$ and some $y \in (0,1)$. That is, $x_i = y$ for every $i \in I_\mathbf{x}^y$. Additionally, we define the antisymmetric complement $\overline{x} = 2c-x$ of a point $x\in[0,1]$ so that $f'(x) = f'(\overline{x})$ by the antisymmetry of $f$.

\begin{lemma}\label{lem:trine}
    Suppose that $f$ is differentiable, antisymmetric about $c$, and strictly convex on $[0,c)$. Then there is an optimal trine solution $\mathbf{x}^*$ of (\ref{model:01space}). 
\end{lemma}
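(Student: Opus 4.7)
The plan is to use Lemma~\ref{lem:lambda} together with antisymmetry to restrict the middle entries of an optimal solution to at most two antisymmetric values, and then collapse them to a single value via an objective-preserving swap. Given an optimal $\mathbf{x}^*$, I would first apply Lemma~\ref{lem:lambda} to obtain a multiplier $\lambda$ with $f'(x_i^*) = \lambda$ for every $i \in I_{\mathbf{x}^*}^y$. Strict convexity on $[0,c)$ together with antisymmetry implies strict concavity on $(c,1]$, so $f'$ is strictly monotone on each of $[0,c]$ and $[c,1]$, and the equation $f'(x) = \lambda$ has at most two roots on $[0,1]$. Because $f'(x) = f'(\overline{x})$ by antisymmetry, these roots must be of the form $y_1$ and $\overline{y_1} = 2c - y_1$ with $y_1 \leq c$. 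Hence the middle entries of $\mathbf{x}^*$ lie in $\{y_1, y_2\}$ where $y_2 := 2c - y_1$; if only one of the two values actually appears, $\mathbf{x}^*$ already has support in $\{0,y,1\}$ and is trine.

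Otherwise, let $p, q \geq 1$ be the multiplicities of $y_1 < c < y_2$. The identity $f(y_1) + f(y_2) = 2 f(c)$ from antisymmetry, combined with $y_1 + y_2 = 2c$, means that the swap which replaces a pair $(y_1, y_2)$ by a pair $(c, c)$ preserves both the sum constraint and the objective. I would apply this swap $\min(p,q)$ times, producing an optimal solution $\mathbf{x}'$ in which $2\min(p,q)$ entries sit at $c$. If $p = q$, then $\mathbf{x}'$ has support in $\{0, c, 1\}$, giving the desired trine with $y = c$.

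The main obstacle is the leftover case $p \neq q$; say $p > q$, so $\mathbf{x}'$ still carries $p - q \geq 1$ entries at $y_1$ and $2q \geq 2$ entries at $c$. Here I would derive a contradiction by picking one entry at $y_1$ and one at $c$ and perturbing them as $(y_1, c) \mapsto (y_1 - \delta, c + \delta)$ for small $\delta > 0$. This move preserves feasibility (for small $\delta$) and the equality constraint, and its first-order effect on the objective is $\delta\bigl(f'(c) - f'(y_1)\bigr)$, which is strictly positive because $y_1 < c$ and $f'$ is strictly increasing on $[0, c]$. This would strictly improve $\mathbf{x}'$, contradicting $F(\mathbf{x}') = F(\mathbf{x}^*)$ and the optimality of $\mathbf{x}^*$. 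Therefore $p = q$ always holds in the two-value case, and a trine optimum exists. The delicate point to spot is the two-stage nature of the argument: the antisymmetric swap only eliminates matched pairs, so one needs a separate first-order perturbation, enabled by strict convexity, to rule out the unbalanced residue.
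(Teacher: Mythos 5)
Your proof is correct and follows essentially the same route as the paper: Lemma~\ref{lem:lambda} forces the fractional entries into a single antisymmetric pair $\{y,\overline{y}\}$, and antisymmetry makes perturbations of such pairs objective-preserving, which is exactly the paper's mechanism. The only cosmetic differences are that you collapse matched pairs inward to $(c,c)$ rather than spreading one pair outward to $(0,2c)$ (which incidentally spares you the paper's case split on whether $2c\le 1$), and you dispose of the unbalanced residue with an explicit first-order improving perturbation, where the paper instead observes that the perturbed solution has the same objective value yet violates condition (a) of Lemma~\ref{lem:lambda}.
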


Similar results with different assumptions are known, for instance, in solving knapsack problems with concave objectives (e.g., \cite[Equation 5]{patriksson2015algorithms}). By assuming antisymmetry, we can achieve this useful result for our non-concave objectives.

\begin{framed}
	\begin{proof}
        It may be that $c$ is not the midpoint of $0$ and $1$; we will focus on the case where $1-c > c$ as the other cases follow by a similar argument. 
		
		In this case $\overline{0} = 2c < 1$. By the antisymetry of $f$, we have $f(x) = 2f(c)-f(\overline{x})$ and thus
		    $$
			f(x)+f(\overline{x}) \quad=\quad 2f(c)-f(\overline{x})+f(\overline{x}) \quad=\quad 2f(c)\quad
		    $$
		for any $x \in [c,1]$.
		
		Let $\mathbf{x}^*$ be an optimal, non-trine solution of (\ref{model:01space}); naturally, it must satisfy Lemma \ref{lem:lambda}. By Lemma \ref{lem:lambda}(\ref{lem:lambda:y}) it must be that $f'(x_i^*)$ takes the same value for each $i \in I_{\mathbf{x}^*}^y$; call this value $\lambda$. Notice that since $f'$ is strictly decreasing over $(c,1]$ (due to strict convexity on $[0,c)$ and antisymmetry) there is at most one point $y$ in $[c,1)$ which satisfies $f'(y) = \lambda$. However, by antisymmetry, $f'(\overline{y}) = f'(y) = \lambda$. Since $y \in [c,1)$, we know that $\overline{y} \leq c$.
		
		Since $\mathbf{x}^*$ is not trine but does satisfy Lemma \ref{lem:lambda}, there exists a pair of distinct indices $j,k$ such that $x_j^* = y$ and $x_k^* = \overline{y}$. Consider the perturbed solution $\mathbf{x}'$ defined by
		    $$
		    x'_i = \begin{cases}
    			y+\Delta, & \textrm{if } i = j \\
    			\overline{y}-\Delta, & \textrm{if } i = k \\
    			x_i, &\textrm{otherwise}
    		    \end{cases}
		    $$
		for some $\Delta \in [\overline{y}-c,\overline{y}]\setminus\{0\}$. Notice that $\overline{y}-\Delta = 2c-(y+\Delta) = \overline{y+\Delta}$ and thus
		    \begin{equation*}
			F(\mathbf{x})-F(\mathbf{x}') \quad=\quad f(y)+f(\overline{y})-f(y+\Delta)-f(\overline{y}-\Delta) \quad=\quad 2f(c)-2f(c) \quad=\quad 0.
		    \end{equation*}
		That is, we may perturb antisymmetric pairs in a solution without changing the objective function value. Further, such a permutation does not change $\sum_{i=1}^nx_i$ so the constraint $\sum_{i=1}^n x_i = M$ 
		will not be violated and the bounds on $\Delta$ are chosen to prevent violation of $\mathbf{x} \in [0,1]^n$. 
		
        If $x_i \in \{0,1\}$ for all $i \notin \{j,k\}$, then such a perturbation does not change the satisfaction of Lemma \ref{lem:lambda}.\ref{lem:lambda:y} and $\mathbf{x}'$ is also optimal. Letting $\Delta = \overline{y}$ gives $x_j' = 2c = \overline{0}$ and $x_k' = 0$ so, in this case, $x_i' \in\{0,2c,1\}$ for each $i \in \{1,\dots,n\}$ so $\mathbf{x}'$ is trine. That is, we may perturb any non-trine, optimal solution with $\vert I_{\mathbf{x}^*}^y \vert = 2$ into a trine, optimal solution.
		
        On the other hand, if there exists any $i \notin \{j,k\}$ for which $x_i \in\{y,\overline{y}\}$ then applying the perturbation for any $\Delta$ means that the entries of $\mathbf{x}'$ span either $\{0,\overline{y},\overline{y}-\Delta,y+\Delta,1\}$ or $\{0,\overline{y}-\Delta,y+\Delta,y,1\}$. In either case, Lemma \ref{lem:lambda}(\ref{lem:lambda:y}) is violated and $\mathbf{x}'$ is not optimal, but $F(\mathbf{x}') = F(\mathbf{x})$ which contradicts our assumption that $\mathbf{x}$ is optimal. That is, no non-trine solution with
		$\vert I_{\mathbf{x}^*}^y \vert \geq 3$ is optimal.
	\end{proof}
\end{framed}

Define $\mathcal{X}(k_0,k_1)$ to be the family of trine solutions $\mathbf{x}$ of (\ref{model:01space}) such that $\vert I_\mathbf{x}^0 \vert = k_0$ and $\vert I_\mathbf{x}^1 \vert = k_1$. That is, $k_0$ many elements of $\mathbf{x}$ take the value $a$ and $k_1$ many elements take the value $b$.

Define $k_y = n-k_0-k_1$ so that $\vert I_\mathbf{x}^y \vert = k_y$. Since each $\mathbf{x} \in \mathcal{X}(k_0,k_1)$ is trine, the constraint $\sum_{i=1}^n x_i = M$ 
tells us that
    \begin{equation}
    \begin{array}{ccccc}
    k_1+yk_y = M &\Rightarrow& y = \frac{M-k_1}{n-k_0-k_1}.\label{eq:yInTermsOfky}
    \end{array}
    \end{equation}
Thus each family defines a unique $k_y$ and, assuming $k_y > 0$, a unique $y$.

\begin{corollary}[Enumeration algorithm]
\label{cor:poly-time}
Suppose that $f$ is differentiable, antisymmetric about $c$, and strictly convex on $[0,c)$. There exists an algorithm that runs in $O(n^2)$ time and evaluations of $f$ that computes the optimal solution of (\ref{model:01space}).
\end{corollary}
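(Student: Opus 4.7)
The plan is to exploit Lemma \ref{lem:trine} directly: since some optimal solution is trine, it must belong to exactly one of the families $\mathcal{X}(k_0,k_1)$ indexed by the nonnegative integers $k_0,k_1$ with $k_0+k_1\leq n$. There are $\binom{n+2}{2} = O(n^2)$ such pairs, so it suffices to enumerate them, identify the unique candidate trine solution in each family, evaluate its objective, and return the best one. This immediately gives the claimed $O(n^2)$ bound on both the running time and the number of evaluations of $f$.

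More concretely, I would first precompute $f(0)$ and $f(1)$ in two evaluations. Then, for each pair $(k_0,k_1)$ with $0\le k_0,k_1$ and $k_0+k_1\leq n$, set $k_y = n-k_0-k_1$ and handle two cases. If $k_y = 0$, the family $\mathcal{X}(k_0,k_1)$ contains the unique binary solution with $k_0$ zeros and $k_1$ ones, which is feasible precisely when $k_1 = M$; in that case the objective value is $k_0 f(0) + k_1 f(1)$. If $k_y > 0$, use (\ref{eq:yInTermsOfky}) to solve for $y = (M-k_1)/k_y$, verify $y \in [0,1]$ (discarding the pair otherwise), and compute the objective $k_0 f(0) + k_1 f(1) + k_y f(y)$ using one fresh evaluation of $f$. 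Maintain a running best and return the associated trine solution at the end.

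Correctness follows because Lemma \ref{lem:trine} guarantees that at least one optimal solution is trine, and every trine solution lies in some family $\mathcal{X}(k_0,k_1)$ whose representative is (up to permutation of indices) uniquely determined by (\ref{eq:yInTermsOfky}); by the symmetry of the objective noted after the definition of (\ref{model:01space}), the particular permutation is immaterial. For the complexity bound, the outer enumeration has $O(n^2)$ iterations, each performing $O(1)$ arithmetic operations and at most one new evaluation of $f$, yielding $O(n^2)$ time and $O(n^2)$ evaluations of $f$ in total.

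There is no real obstacle here; the work has been done by Lemma \ref{lem:trine}, which collapses the continuous feasible region into a discrete family of $O(n^2)$ candidates. The only minor subtlety worth flagging in the write-up is the edge case $k_y = 0$ (where $y$ is not defined and the constraint forces $k_1 = M$) and the requirement to reject pairs for which the induced $y$ falls outside $[0,1]$, both of which are easy to check in constant time per pair.
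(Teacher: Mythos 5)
Your proposal is correct and follows essentially the same route as the paper's proof: enumerate the $O(n^2)$ integer triples $(k_0,k_1,k_y)$ summing to $n$, recover $y$ from \eqref{eq:yInTermsOfky}, and return the best objective value, with Lemma~\ref{lem:trine} justifying the restriction to trine solutions. Your treatment of the edge cases ($k_y=0$ and $y\notin[0,1]$) is a welcome bit of extra care that the paper's one-line proof leaves implicit.
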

\begin{proof}
    Enumerate partitions of $n$  into three integers, $k_0$, $k_1$, and $k_y$, of which there are $O(n^2)$ many such choices.   Compute $y$ via \eqref{eq:yInTermsOfky} (when $k_y \neq 0$).  Then compare all values of the function $f(0)k_0+f(1)k_1+f(y)k_y$ and return the largest.
\end{proof}

Note that this algorithm can be improved to $O(n)$ time complexity when you can argue that either $k_0$ or $k_1$ is zero in an optimal solution as was accomplished by Srivastava and Bullo \cite[Corollary 5]{srivastava2014knapsack} for identically-separable knapsack problems with sigmoid objectives.

\section{A $k$-space model}
We can express the objective function value of a solution in terms of our new parameters:
	$$
	F(\mathbf{x}) = f(0)k_0+f(1)k_1+f(y)k_y,
	$$
for any $\mathbf{x} \in \mathcal{X}(k_0,k_1)$. 
Therefore, solutions are homogeneous within their respective groups and we can construct a simplified model that takes advantage of this structure.

\begin{definition}
\label{model:kspace}
The $k$-space reformulation of (\ref{model:01space}) is given by  

 \begin{minipage}{0.05\textwidth}
    \begin{flushleft}
        \text{($k$-SNK)}
    \end{flushleft}
\end{minipage}
\begin{minipage}{0.90\textwidth}
    \begin{subequations}
    \label{eq:k-space}
	\begin{alignat}{2}
		\textrm{Maximize}& \quad &  \mathclap{\hspace{40pt}\brObj(\mathbf{k}) = f(0)k_0+f(1)k_1+f(y)k_y} 	  \label{obj:kspace}\\
			\textrm{s.t.}& \quad &     k_0+k_1+k_y &= n   \label{constr:kspacen}\\
                             & \quad &  k_1+yk_y &= M 	     \label{constr:kspaceM}\\
			 			 & \quad &     k_0,k_1,k_y &\in \mathbb{Z}_+                   \label{constr:kspaceint}\\
			 			 & \quad &               y &\in (0,1)                        \label{constr:kspaceynds}
	\end{alignat}
	\end{subequations}
\end{minipage}

\end{definition}
A solution of \kspace takes the form $(k_0,k_1,k_y,y)$, is uniquely defined by any two of its elements, and represents a family of trine solutions of (\ref{model:01space}) (if $k_y = 0$, $y$ does not contribute to the satisfaction of \eqref{constr:kspaceM} or to the objective function and so can take any value). Assuming that $f$ is differentiable, antisymmetric about $c$, and strictly convex on $[0,c)$; \kspace has the same optimal value as (\ref{model:01space}). We proceed by solving the continuous relaxation of this MINLP.

\subsection{Continuous Relaxation}%

We call a solution $\mathbf k = (k_0,k_1,k_y,y)$ \textit{equality-feasible} if it satisfies the equality constraints \eqref{constr:kspacen} and \eqref{constr:kspaceM} of \kspace, but not necessarily non-negativity and integrality \eqref{constr:kspaceint} or the bounds on $y$ \eqref{constr:kspaceynds}.

\begin{lemma}[Feasibility in terms of $k_0$ and $k_1$]
\label{lem:feasibility}
    An equality-feasible solution $(k_0,k_1,k_y,y)$ is fully feasible to the continuous relaxation of \kspace if and only if
        \begin{equation*}
        \max\left(\tfrac{yn-M}{y},0\right) \quad\leq\quad k_0 \quad\leq\quad n-M     \qquad\textrm{and}\qquad  \max\left(\tfrac{M-yn}{1-y},0\right) \quad\leq\quad k_1 \quad\leq\quad M.
        \end{equation*}
\end{lemma}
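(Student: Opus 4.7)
The plan is to treat $y$ as a parameter in $(0,1)$ (this is part of full feasibility, and is also what gives meaning to the denominators appearing in the stated bounds) and then rewrite each of the three non-negativity conditions $k_0, k_1, k_y \ge 0$ as a linear inequality in $k_0$ alone, then, by a symmetric substitution, in $k_1$ alone. Since the two equality constraints \eqref{constr:kspacen} and \eqref{constr:kspaceM} leave only one degree of freedom once $y$ is fixed, each of $k_1$ and $k_y$ becomes an affine function of $k_0$, and each remaining non-negativity condition translates to exactly one of the bounds claimed in the statement.

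Concretely, I would first subtract \eqref{constr:kspaceM} from \eqref{constr:kspacen} to get $k_0 + (1-y)k_y = n - M$, whence $k_y = \tfrac{n-M-k_0}{1-y}$ and $k_1 = n - k_0 - k_y = \tfrac{y k_0 - (yn - M)}{1-y}$, valid because $1-y > 0$. Then: the condition $k_y \ge 0$ is equivalent to $k_0 \le n - M$; the condition $k_1 \ge 0$ is equivalent to $k_0 \ge (yn - M)/y$ (using $y > 0$); the condition $k_0 \ge 0$ is kept as is. Taking the tighter of the last two lower bounds yields the displayed range for $k_0$.

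The analogous bound on $k_1$ follows by swapping which equation is used to eliminate $k_y$: from \eqref{constr:kspaceM} one has $k_y = (M - k_1)/y$, so $k_0 = n - k_1 - k_y = \tfrac{(1-y) k_1 - (M - yn)}{y}$. Now $k_y \ge 0$ gives $k_1 \le M$, $k_0 \ge 0$ gives $k_1 \ge (M - yn)/(1-y)$, and $k_1 \ge 0$ is kept as is; combining them yields the stated range for $k_1$. Both directions of the ``if and only if'' are immediate because every manipulation above is reversible.

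The only real obstacle is bookkeeping the direction of each inequality when dividing by $y$ and by $1 - y$, but both are strictly positive on $(0,1)$, so no signs flip. The one edge case worth noting is $k_y = 0$, equivalently $k_0 = n - M$ and $k_1 = M$; here $y$ drops out of \eqref{constr:kspaceM} and may take any value in $(0,1)$, and the derived bounds collapse to equalities, consistent with the claimed characterization.
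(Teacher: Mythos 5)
Your proof is correct and takes essentially the same route as the paper, whose proof merely asserts that the bounds are ``easily derivable by projecting out the other variables'' via the two equality constraints---exactly the elimination you carry out explicitly, with the signs of $y$ and $1-y$ handled correctly. (Your closing remark that the bounds ``collapse to equalities'' when $k_y=0$ is slightly imprecise---only the upper bounds $k_0=n-M$ and $k_1=M$ are attained there---but this does not affect the argument.)
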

\begin{framed}
    \begin{proof}
 Constraints \eqref{constr:kspacen} and \eqref{constr:kspaceint}  imply 
$
            0 \leq k_0,k_1,k_y \leq n %
$.  Using the equations $k_1+yk_y = M$ and $k_0+k_1+k_y = n$ from \kspace, the bounds on each $k$ are easily derivable by projecting out the other variables.

    \end{proof}
\end{framed}

\begin{lemma}\label{lem:dr}
    Suppose that $f$ is twice-differentiable, antisymmetric about $c$, and strictly convex on $[0,c)$. For any $r \in [0,1]\setminus \{c\}$, the equation
        \begin{equation}
        f(r) = f(x)+(r-x)f'(x) \label{def:dr}
        \end{equation}
    has exactly two solutions: $x=r$ and another, which lies between $\overline{r} = 2c-r$ and $c$. We denote this additional solution $d_r$.
\end{lemma}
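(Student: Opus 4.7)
The plan is to recast the equation as the zero set of
$g(x) := f(x) + (r - x) f'(x) - f(r)$
on $[0,1]$; clearly $g(r) = 0$, giving the trivial solution. Differentiating once yields $g'(x) = (r - x) f''(x)$. Antisymmetry of $f$ forces $f''(c) = 0$ (differentiate the identity $f(x) - f(c) = f(c) - f(2c - x)$ twice) and, combined with the hypothesis $f'' < 0$ on $(c, 1)$, gives $f'' > 0$ on $(0, c)$. Hence the sign of $g'$ is fully determined on each of the subintervals cut out by $r$ and $c$ in $[0, 1]$.

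Assume without loss of generality that $r < c$; the case $r > c$ will follow by the symmetric argument after antisymmetric reflection. Then $g$ is strictly increasing on $[0, r]$, strictly decreasing on $[r, c]$, and strictly increasing on $[c, 1]$. In particular, $g(r) = 0$ is a strict local maximum on $[0, c]$, so $g(x) < 0$ for all $x \in [0, c] \setminus \{r\}$, and $x = r$ is the unique zero of $g$ in $[0, c]$. On $(c, 1]$ the function $g$ is strictly increasing starting from $g(c) < 0$, so it has at most one zero, and such a zero exists iff $g$ becomes positive. To verify this, I evaluate $g$ at $\overline{r} = 2c - r$: using the antisymmetry identities $f(\overline{r}) = 2f(c) - f(r)$ and $f'(\overline{r}) = f'(r)$, direct substitution gives
\[
g(\overline{r}) \;=\; 2\bigl[\,f(c) - f(r) - (c - r) f'(r)\,\bigr],
\]
which is twice the gap at $c$ between $f$ and its tangent line at $r$. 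Strict convexity of $f$ on $[0, c)$ with $r \neq c$ makes this gap strictly positive (the map $x \mapsto f(x) - f(r) - f'(r)(x - r)$ vanishes with zero derivative at $r$ and has strictly positive derivative throughout $(r, c)$), so $g(\overline{r}) > 0$. By strict monotonicity of $g$ on $[c, \overline{r}]$, the unique second zero $d_r$ lies in $(c, \overline{r})$, completing the count.

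The main obstacle is the antisymmetry-driven simplification of $g(\overline{r})$ paired with the strict tangent-line inequality for convex functions; once these are in hand, the rest reduces to a short sign chart. A minor edge case is the possibility that $\overline{r}$ falls outside $[0, 1]$, which can occur if $c \neq 1/2$ and $r$ is near an endpoint; there I would argue directly on $[0, 1]$ via the same monotonicity-plus-positivity argument, locating $d_r$ in $(c, 1)$ by evaluating $g$ at a suitable interior point obtained from the restricted convexity argument.
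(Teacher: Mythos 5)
Your proof is correct and follows essentially the same route as the paper's: the same auxiliary function $g(x) = f(x) + (r-x)f'(x) - f(r)$, the same sign chart for $g'(x) = (r-x)f''(x)$, and the same evaluation $g(\overline{r}) = 2\left[f(c) - f(r) - (c-r)f'(r)\right] > 0$ via antisymmetry and the strict first-order convexity inequality. The edge case you flag ($\overline{r}$ possibly falling outside $[0,1]$ when $c \neq 1/2$) is genuine but is silently ignored by the paper as well, which applies the intermediate value theorem on $(c, 2c-r)$ without comment.
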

Define $d_0$ and $d_1$ as the $d_r$ solutions to equation \eqref{def:dr} for $r=0$ and $r=1$, respectively. See Figure \ref{fig:dr} for a graphical definition of these two points. The point $d_0$ can be used to define the concave envelope of the objective as was done by Freeland and Weinberg in \cite{freeland1980s}, but approximation performance of this envelope is arbitrarily bad \cite{srivastava2014knapsack}. Instead, we define $d_0$ because of its influence on the optimal solution of \kspace. 

\renewcommand{\scaling}{0.6}
    \begin{figure}[h!]\centering
    \begin{subfigure}{0.49\textwidth}
        \centering
        \resizebox{\scaling\textwidth}{\scaling\textwidth}{%
            \begin{tikzpicture}
            \node (img)  {\includegraphics[scale=0.5]{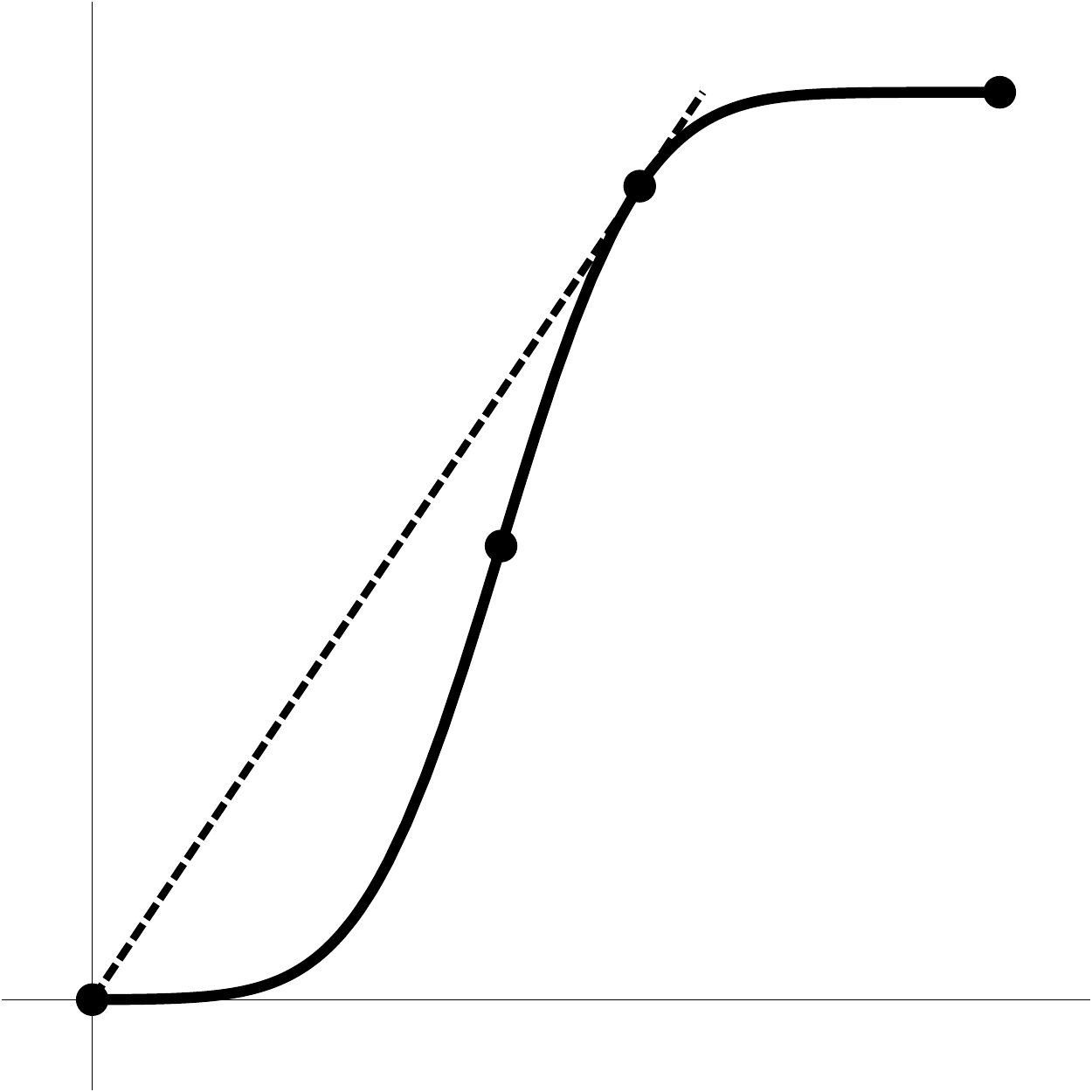}};
            \node[below=of img, node distance=1cm, yshift=1.5cm,xshift=-3.5cm] {$(0,f(0))$};
            \node[above=of img, node distance=1cm, yshift=-2.9cm,xshift=1.3cm] {\LARGE$f(d_0)$};
            \node[below=of img, node distance=1cm, yshift=1.5cm,xshift=0.65cm] {\LARGE$d_0$};
            \node[below=of img, node distance=1cm, yshift=1.9cm,xshift=0.5cm] {$\rule{0.4pt}{2ex}$};
            \node[left=of img, node distance=0cm, anchor=center,xshift=0.8cm] {$f(r)$};
            \node[below=of img, node distance=1cm, yshift=1.5cm,xshift=2.65cm] {$1$};
            \node[below=of img, node distance=1cm, yshift=1.75cm,xshift=2.65cm] {$\rule{.4pt}{1ex}$};
            \node[below=of img, node distance=1cm, yshift=1.5cm,xshift=-0.25cm] {$c$};
            \node[below=of img, node distance=1cm, yshift=1.75cm,xshift=-0.25cm] {$\rule{.4pt}{1ex}$};
            \node[below=of img, node distance=0cm, yshift=0.8cm] {$r$};
            \end{tikzpicture}
        }
        \caption{Tangent definition of $d_0$}
        \label{fig:da}
    \end{subfigure}
    \begin{subfigure}{0.49\textwidth}
        \centering
        \resizebox{\scaling\textwidth}{\scaling\textwidth}{%
            \begin{tikzpicture}
            \node (img)  {\includegraphics[scale=0.5]{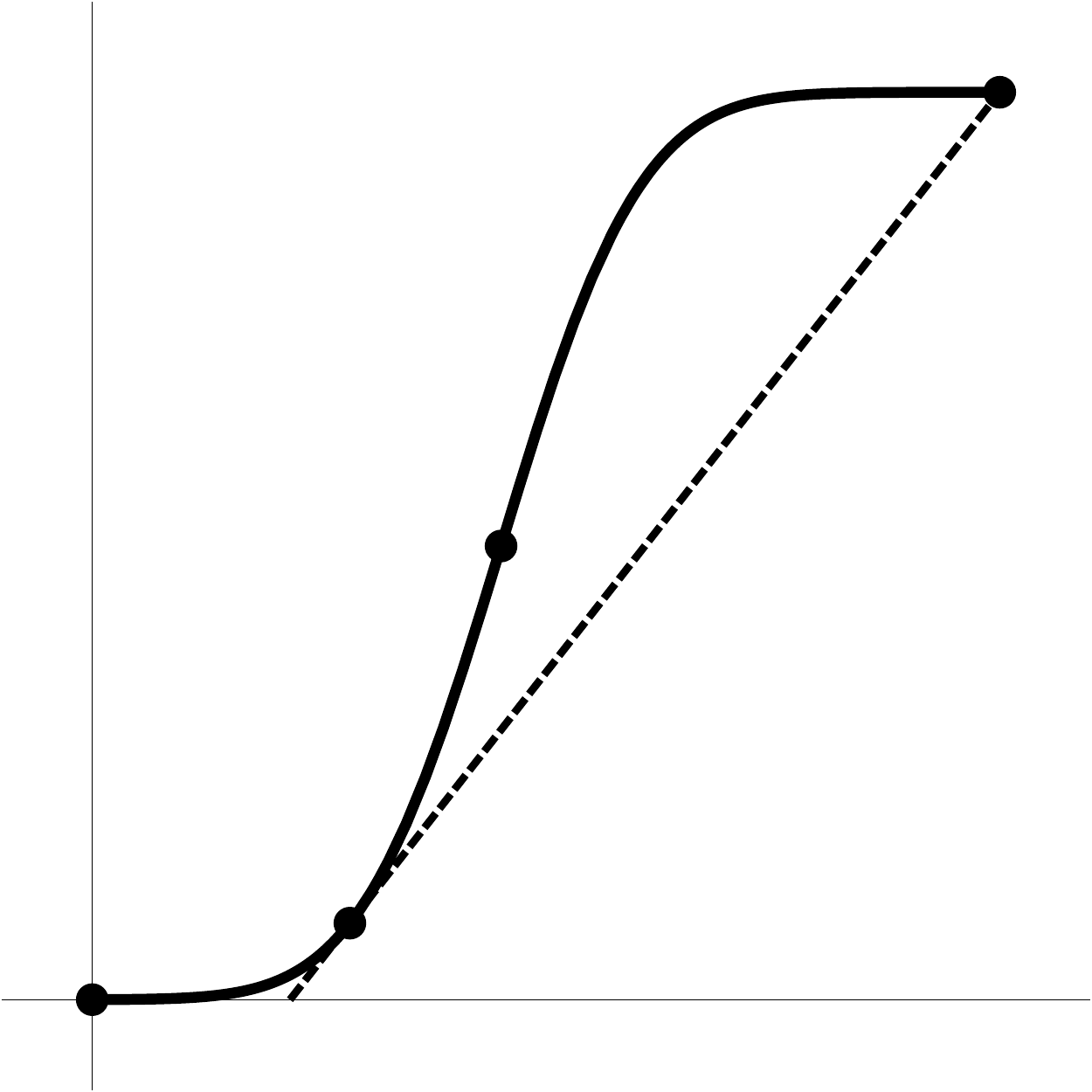}};
            \node[below=of img, node distance=1cm, yshift=1.5cm,xshift=-2.9cm] {$0$};
            \node[below=of img, node distance=1cm, yshift=2.7cm,xshift=-1.9cm] {\LARGE$f(d_1)$};
            \node[below=of img, node distance=1cm, yshift=1.5cm,xshift=-1.03cm] {\LARGE$d_1$};
            \node[below=of img, node distance=1cm, yshift=1.9cm,xshift=-1.13cm] {$\rule{0.4pt}{2ex}$};
            \node[below=of img, node distance=0cm, yshift=1cm] {$r$};
            \node[left=of img, node distance=0cm, anchor=center,xshift=0.8cm] {$f(r)$};
            \node[below=of img, node distance=1cm, yshift=1.5cm,xshift=2.65cm] {$1$};
            \node[below=of img, node distance=1cm, yshift=1.75cm,xshift=2.65cm] {$\rule{.4pt}{1ex}$};
            \node[below=of img, node distance=1cm, yshift=1.5cm,xshift=-0.25cm] {$c$};
            \node[below=of img, node distance=1cm, yshift=1.75cm,xshift=-0.25cm] {$\rule{.4pt}{1ex}$};
            \node[above=of img, node distance=1cm, yshift=-2.2cm,xshift=2.9cm] {$f(1)$};
            \end{tikzpicture}
        }
        \caption{Tangent definition of $d_1$}
        \label{fig:gx}
    \end{subfigure}
    \caption{ The value $d_r$ defined  such that the tangent line of $f$ at $(d_r, f(d_r))$  passes through $(r,f(r))$.  We highlight important examples in this figure.}
    \label{fig:dr}
    \end{figure}

\begin{framed}
    \begin{proof}
        Because $f$ is antisymmetric about $c$, it suffices to show these properties for some $r < c$. 
        
        Define $g(x) := f(x)+f'(x)(r-x)-f(r)$. Clearly $g(r) = 0$, so $x = r$ is a solution of equation~\eqref{def:dr}. We claim that $g(c) < 0$ and $g(2c-r) > 0$.   Thus, by the Intermediate Value Theorem, we conclude that there exists a point $d_r \in (c,2c-r)$ such that $g(d_r) = 0$.
        
        First note that since $f(x)$ is strictly convex over $[0,c)$, the first-order-convexity of $f$ at $x = c$ implies $g(c) = f(c) + f'(c)(r-c)-f(r) < 0$. Now notice, by the antisymmetry of $f$, that
            \begin{align*}
            g(2c-r) &= f(2c-r)+f'(2c-r)\left(r-(2c-r)\right)-f(r) = 2f(c)+2f'(r)(r-c)-2f(r) \\
                &= -2\left(f(r)+f'(r)(c-r)-f(c)\right) > 0,
            \end{align*}
        where the last inequality follows from  first-order-convexity of $f$ applied at $x=r$.
        
        \begin{figure}
        \centering
            \begin{tikzpicture}
            \node (img)  {\includegraphics[scale=0.5]{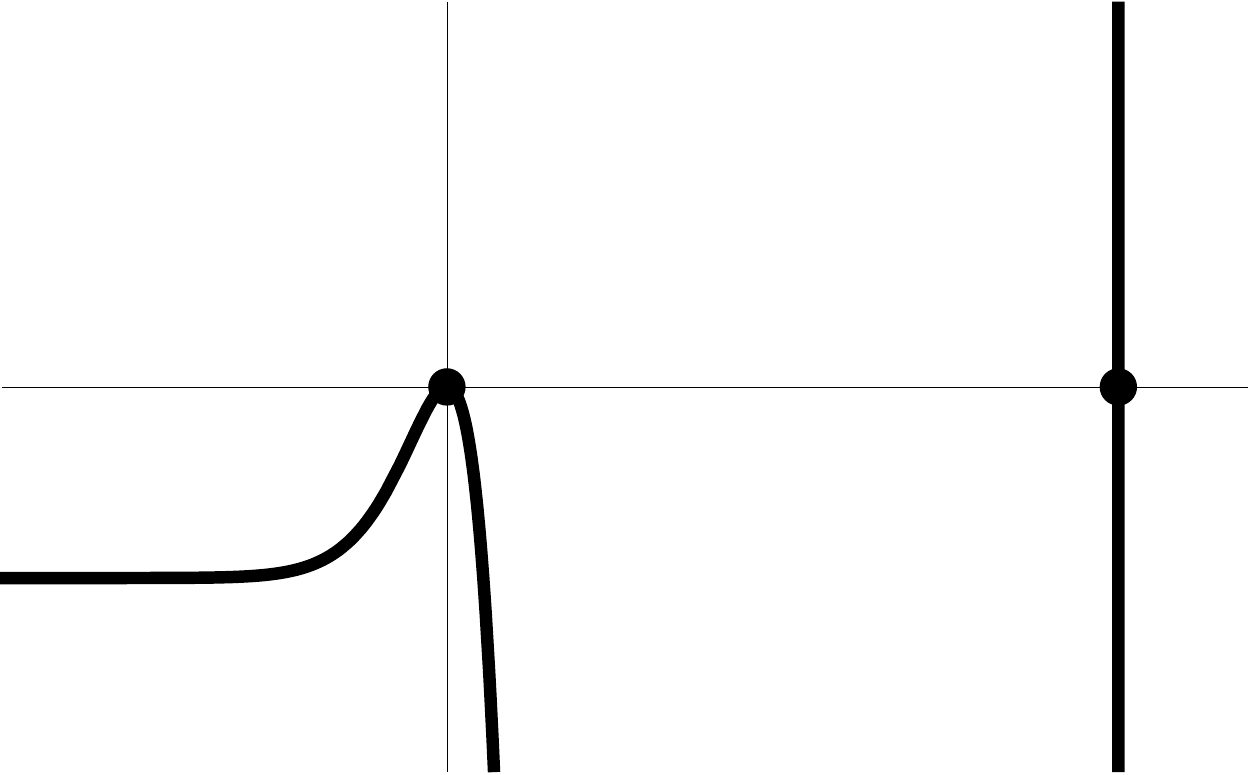}};
            \node[right=of img, node distance=0cm, yshift=0cm,xshift=-1cm] {$x$};
            \node[above=of img, node distance=0cm, anchor=center, yshift=-0.8cm,xshift=-0.8cm] {$g(x)$};
            \node[below=of img, node distance=1cm, yshift=3.4cm,xshift=-1.15cm] {$0$};
            \node[below=of img, node distance=1cm, yshift=3.55cm,xshift=2.2cm] {$d_0$};
            \end{tikzpicture}
            \caption{An example of $g(x)$ from the proof Lemma~\ref{lem:dr} for $r = 0$.  In particular, an analysis of $g(x)$ and $g'(x)$ show that there are exactly two roots which are at $x = 0$ and $x = d_0$. }
            \label{fig:gxRoots}
        \end{figure}

        It remains only to demonstrate the uniqueness of $d_r$. Consider the derivative of $g$:
            \begin{equation*}
            g'(x) = (r-x)f''(x).
            \end{equation*}
        Since $f$ is strictly convex over $[0,c]$ and strictly concave over $[c,1]$, it follows that
            \begin{equation*}
            \begin{cases}
            g'(x) > 0 & \textrm{if } x \in [0,r), \\
            g'(x) < 0 & \textrm{if } x \in (r,c), \\
            g'(c) = 0, \\
            g'(x) > 0 & \textrm{if } x \in (c,1], \\
            \end{cases}
            \end{equation*}
        which implies that our roots are unique. See Figure \ref{fig:gxRoots}.
    \end{proof}
\end{framed}

\begin{prop}
\label{prop:Compute_d_0}
For any $r \in [0,1]$, we can compute $d_r$ up to $\epsilon$ additive error in $O(\log(\tfrac{1}{\epsilon}))$ many oracle calls of $f$ and $f'$.
\end{prop}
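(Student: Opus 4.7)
The plan is to apply bisection to the auxiliary function $g(x) := f(x) + f'(x)(r-x) - f(r)$ introduced in the proof of Lemma~\ref{lem:dr}. Each evaluation of $g$ at a point $x$ uses exactly one call to $f$ and one call to $f'$ (the constant $f(r)$ can be cached after a single preprocessing call), so counting bisection iterations directly bounds the total oracle complexity.

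First, I would identify an initial bracket containing $d_r$. The proof of Lemma~\ref{lem:dr} establishes that $g$ takes opposite signs at the endpoints of the interval with endpoints $c$ and $\overline{r} = 2c-r$, and that $d_r$ is the unique root of $g$ in that interval. Moreover, from $g'(x) = (r-x)f''(x)$ together with the strict convexity/concavity assumptions on $f$, the derivative $g'$ has constant sign on the open interval between $c$ and $\overline{r}$, so $g$ is strictly monotone on this bracket. This is exactly the setup required for bisection to converge to $d_r$.

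Next, I would execute the standard bisection loop: at each iteration, evaluate $g$ at the midpoint of the current bracket and discard the half whose endpoint has the same sign as the midpoint value. After $t$ iterations the bracket width is $|\overline{r}-c|/2^t = |c-r|/2^t \leq 1/2^t$, so choosing $t = \lceil \log_2(1/\epsilon)\rceil$ yields a midpoint $\tilde{d}_r$ with $|\tilde{d}_r - d_r| \leq \epsilon$. This uses $O(\log(1/\epsilon))$ oracle calls in total, as claimed.

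I do not anticipate any real obstacle: the hard work (bracketing the root and establishing monotonicity of $g$ on the bracket) has already been done inside the proof of Lemma~\ref{lem:dr}, and what remains is a textbook bisection argument. One minor point worth noting is that $c$ itself must be accessible; if it is not given, a single preliminary application of the same bisection idea to $f''$ on $(0,1)$ locates $c$ in $O(\log(1/\epsilon))$ oracle calls as well, which does not affect the asymptotic bound. If a sharper result were desired, the fact that $g'$ has constant sign on the bracket also makes Newton's method well-defined here, giving $O(\log\log(1/\epsilon))$ iterations; but this refinement is unnecessary for the statement at hand.
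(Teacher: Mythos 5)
Your proposal is correct and follows essentially the same route as the paper: both define $g(x) = f(x) + f'(x)(r-x) - f(r)$, use the sign change of $g$ on the bracket between $c$ and $\overline{r} = 2c-r$ established in Lemma~\ref{lem:dr}, and run bisection, noting that each evaluation of $g$ costs one call to $f$ and one to $f'$. Your added remarks on the monotonicity of $g$ over the bracket and the explicit iteration count are fine elaborations of the same argument.
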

\begin{proof}
 Define $g(x)$ as in the previous proof.
    By the previous lemma, $g(r) = 0$ and $g(d_r) = 0$. Furthermore, $d_r$ lies between $2c - r$ and $c$.   Since $d_r$ is unique, $g$ doesn't have any other zeros.  Thus, we can run the bisection method with initial bounds $2c - r$ and $c$ to obtain the desired complexity.  Note that each evaluation of $g$ requires evaluations of both $f'$ and $f$.
\end{proof}

\begin{theorem}[KKT Solutions of \kspace]
\label{thm:KKT-Solutions}
Suppose that $f$ is twice-differentiable, antisymmetric about $c$, and strictly convex on $[0,c)$. The KKT solutions to the Continuous Relaxation of \kspace are given in the following table.
    $$
    \begin{array}{cc|cccc}
    \textbf{Case} & \textbf{Condition}       & k_0                  & k_1                  & k_y                & y           \\\hline
    \textbf{\caseZero}    & k_y = 0         & n-M     & M     & 0     & \sim       \\
    \textbf{\caseOne}    & k_0,k_1,k_y > 0 & \multicolumn{4}{c}{\text{No KKT Solution}}                                                 \\
    \textbf{\caseTwoA}   & k_y = n         & 0                    & 0                    & n                  & \frac{M}{n} \\
    \textbf{\caseTwoB}   & k_1 = 0         & n-\frac{M}{d_0} & 0                    & \frac{M}{d_0} & d_0         \\
    \textbf{\caseTwoC}   & k_0 = 0          & 0                    & n-\frac{n-M}{1-d_1} & \frac{n-M}{1-d_1} & d_1        
    \end{array}
    $$
\end{theorem}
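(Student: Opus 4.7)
The plan is to write out the Lagrangian for the continuous relaxation of \kspace, read off the KKT necessary conditions, and then split into cases according to which of the non-negativity constraints on $k_0, k_1, k_y$ are active. Since $y \in (0,1)$ is an interior range, no boundary multipliers on $y$ will appear; the only stationarity condition involving $y$ is $k_y\bigl(f'(y) - \lambda_2\bigr) = 0$, which forces $f'(y) = \lambda_2$ whenever $k_y > 0$.

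The stationarity conditions I would record are
\[
f(0) = \lambda_1 - \mu_0, \qquad f(1) = \lambda_1 + \lambda_2 - \mu_1, \qquad f(y) = \lambda_1 + \lambda_2 y - \mu_y,
\]
together with $\mu_i \geq 0$ and complementary slackness $\mu_i k_i = 0$. The five exhaustive sub-cases determined by which of $k_0, k_1, k_y$ are strictly positive are handled as follows. Case \caseZero{} ($k_y = 0$) is immediate from the equality constraints $k_0 + k_1 = n$ and $k_1 = M$. Case \caseTwoA{} ($k_0 = k_1 = 0$, $k_y = n$) follows from $yk_y = M$, giving $y = M/n$. For Case \caseTwoB{} ($k_1 = 0$, $k_0, k_y > 0$) I would set $\mu_0 = \mu_y = 0$, eliminate $\lambda_1 = f(0)$ and $\lambda_2 = f'(y)$ using the stationarity conditions, and rewrite the equation coming from $\partial L/\partial k_y$ as $f(0) = f(y) + (0 - y)f'(y)$. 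By Lemma~\ref{lem:dr} this forces $y = d_0$, and then $yk_y = M$ yields the claimed values of $k_y$ and $k_0$. Case \caseTwoC{} is symmetric, invoking the equation at $r = 1$ to conclude $y = d_1$.

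The main obstacle, and the only case requiring a global argument, is Case \caseOne{} where $k_0, k_1, k_y > 0$ simultaneously. Here all three multipliers $\mu_0, \mu_1, \mu_y$ vanish, and the system reduces to $\lambda_1 = f(0)$, $\lambda_2 = f(1) - f(0)$, together with $f(y) = f(0) + y\bigl(f(1) - f(0)\bigr)$ and $f'(y) = f(1) - f(0)$. These two conditions together assert that the tangent line to the graph of $f$ at $\bigl(y, f(y)\bigr)$ coincides with the secant through $\bigl(0, f(0)\bigr)$ and $\bigl(1, f(1)\bigr)$; equivalently, $y$ is simultaneously a solution of equation~\eqref{def:dr} for $r = 0$ and for $r = 1$.

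By Lemma~\ref{lem:dr}, the only candidates in $(0,1)$ are $y = d_0$ and $y = d_1$, so any such $y$ would force $d_0 = d_1$. However, the same lemma places $d_0$ strictly in $(c, 2c)$ and places $d_1$ strictly in $(2c-1, c)$ (intersected with $[0,1]$), so $d_0 > c > d_1$. This contradicts $d_0 = d_1$, ruling out Case \caseOne{} and completing the case analysis.
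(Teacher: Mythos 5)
Your proposal is correct and follows essentially the same route as the paper: form the Lagrangian of the continuous relaxation, case-split on which of $k_0,k_1,k_y$ are strictly positive, use complementary slackness to reduce each case to an instance of equation~\eqref{def:dr}, and rule out Case \caseOne{} via Lemma~\ref{lem:dr} and the ordering $d_1 < c < d_0$. The only (cosmetic) divergence is that the paper also introduces multipliers for the bounds on $y$ and folds the boundary cases $y\in\{0,1\}$ into Case \caseZero, whereas you dismiss them by openness of $(0,1)$; your tangent-equals-secant reading of Case \caseOne{} is an equivalent reformulation of the paper's pair of equations.
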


Note that the ``$\sim$'' symbol under case \caseZero\ represents an undefined value for $y$. This is because $k_y = 0$ and, by equation \ref{eq:yInTermsOfky}, $y$ is undefined for all such solutions.

\begin{framed}
    \begin{proof}
        The Lagrangian function is
        \begin{equation}\begin{split} 
        \mathcal L(\mathbf{k}, \lambda, \mu)
        &= f(0) k_0 + f(1) k_1 + f(y) k_y  + \lambda_1 (k_1 + y k_y - M) + \lambda_2 (k_0 + k_1 + k_y - n) \\
            & + \mu_0 k_0  + \mu_1 k_1  + \mu_y k_y 
            + \xi^-(y) + \xi^+(1 - y)
        \end{split}\end{equation}
        
        The corresponding KKT conditions are\\
         \begin{subequations}
        \begin{minipage}[t]{.45\textwidth}
        \vspace{-0.5cm}
        \begin{align}
            f(0) + \lambda_2 + \mu_0 &= 0 \label{partialka}\\
            f(1) + \lambda_1 + \lambda_2 + \mu_1 &= 0\label{partialkb}\\
            f(y) + \lambda_1 y + \lambda_2 + \mu_y &= 0 \label{partialky}\\
            k_y f'(y) + \lambda_1 k_y + \xi^- - \xi^+&= 0 \label{partialy}\\
           \forall\ i \in \{0,y,1\} \qquad \mu_i \cdot k_i &= 0 &  \label{slackmu}\\
            y\xi^-  &= 0 \label{slackxi-}\\
            (1-y)\xi^+  &= 0 \label{slackxi+}
        \end{align}
        \end{minipage}
          \begin{minipage}[t]{.45\textwidth}
          \begin{align}
            k_1+yk_y &= M \label{feasibleM}\\
            k_0+k_1+k_y &= n \label{feasiblen}\\
            k_0, k_1, k_y & \geq 0\\
            0 \leq y & \leq 1 \\
            \forall\ i \in \{0,y,1\} \qquad \mu_i &\geq 0  &\\
            \xi^-,\xi^+ &\geq 0
            \end{align}
        \end{minipage}
                \end{subequations}

        \begin{itemize}
            \item[\textbf{\caseZeroA}.] \big[$k_y = 0$\big]
            Then \eqref{feasibleM} reduces to $k_1 = M$ and \eqref{feasiblen} uniquely gives $k_0 = n-M$.
            
            \item[\textbf{\caseZeroB.}] \big[$y = 0$ or $y = 1$\big]
            Then $k_y$ can be combined with $k_0$ or $k_1$, respectively, reducing the problem to these two variables and giving the same outcome as Case~\caseZeroA.
        \end{itemize}
    
        \noindent We may now assume that $0 < y < 1$ and $k_y > 0$ so that \eqref{slackxi-} and \eqref{slackxi+} require $\xi^-=0$ and  $\xi^+ = 0$. Therefore \eqref{partialy} reduces to
            $$
            k_y f'(y) + \lambda_1 k_y = 0 \quad\Rightarrow\quad \lambda_1 = -f'(y)
            $$
        Then by \eqref{partialka}, \eqref{partialkb}, and \eqref{partialky}, solving for $\lambda_2$, we have
            \begin{equation*}
            \begin{array}{lcccr}
            f(0) + \mu_0
            & =&   f(1) - f'(y) + \mu_1
            & =&   f(y) - f'(y) y + \mu_y
            \end{array}
            \end{equation*}
        which can be rewritten as
        \begin{subequations}
        \begin{align}
            f(1) - f(0) - f'(y) + (\mu_1 - \mu_0) &=0, \\
            f(1) - f(y) - f'(y)(1-y) + (\mu_1 - \mu_y) &= 0,\label{mu23} \\
            f(y) - f(0) - yf'(y) + (\mu_y - \mu_0) &= 0. \label{mu31}
        \end{align}
        \end{subequations}
        Note that one of these equations is redundant.
        
        \begin{itemize}
            \item[\caseOne] \big[$k_0, k_1, k_y > 0$\big]
            Then by \eqref{slackmu}, $\mu_i = 0$ for $i \in \{0,y,1\}$ and equations \eqref{mu23} and \eqref{mu31} give
                \begin{equation*}
                f(1) - f(y) - f'(y)(1-y)  = 0 \qquad\text{and}\qquad f(y) - f(0) - yf'(y)  = 0.
                \end{equation*}
            According to Lemma \ref{lem:dr}, the unique solutions to these equations, respectively,  are $y = d_1$ and $y = d_0$ ($y=0$ and $y=1$ are also roots but have already been excluded by case \caseZeroB). Also by Lemma \ref{lem:dr}, it must be that $d_1 < c < d_0$ because $0 < c < 1$. Thus $d_0 \neq d_1$, so there is no solution satisfying both equations and there is no KKT appropriate solution in this case.
            
            \item[\textbf{\caseTwoA.}] \big[$k_y = n$\big]
            Then $k_0 = k_1 = 0$ by \eqref{feasiblen}, so \eqref{feasibleM} simply reduces to $y = \tfrac{M}{n}$. 
            
            \item[\textbf{\caseTwoB.}] \big[$k_1 = 0$, $k_0 > 0$, $k_y > 0$\big]
            Then $\mu_0 = \mu_y = 0$.  Hence \eqref{mu31} reduces to $f(y) - f(0) - yf'(y) = 0$ which has unique solution $y = d_0$ by Lemma~\ref{lem:dr}. Thus \eqref{feasibleM} reduces to $k_y = \frac{M}{d_0}$ and \eqref{feasiblen} uniquely gives $k_0 = n-k_y = n-\frac{M}{d_0}$.
            
            \item[\textbf{\caseTwoC.}] \big[$k_0 = 0$, $k_1 > 0$, $k_y > 0$\big]
            Then $\mu_1 = \mu_y = 0$.  Hence \eqref{mu23} reduces to $f(1)-f(y)-f'(y)(1-y) = 0$ which has unique the solution $y = d_1$ by Lemma~\ref{lem:dr}. Thus \eqref{feasibleM} and \eqref{feasiblen} give $k_1+d_1k_y = M$ and $k_1+k_y = n$ which are solved by $k_1=n-\frac{n-M}{1-d_1}$ and $k_y = \frac{n-M}{1-d_1}$.
        \end{itemize}
        This concludes the case analysis.
    \end{proof}
\end{framed}

Using \eqref{eq:yInTermsOfky}, we can uniquely project an equality-feasible solution $\mathbf{k} = (k_0,k_1,k_y,y)$ onto any two of its parameters. Let $\brObj(k_0,k_1)$ be the projection of $F(\mathbf{k})$ onto the $(k_0,k_1)$ space. Under this projection, both $y$ and $k_y$ are functions of $k_0$ and $k_1$. Figure \ref{fig:gradient} contains plots the vector field of the $\nabla \brObj(k_0,k_1)$ (the value of $y$ is represented by background color gradient). Theorems \ref{thm:y=d_0} and \ref{thm:k_1isextreme} are informed by Figure~\ref{fig:gradient}. 

\renewcommand{\scaling}{0.84}
\begin{figure}
    \begin{subfigure}{0.3\textwidth}\label{fig:gradient:M<dn}
    \resizebox{\scaling\textwidth}{\scaling\textwidth}{%
        \begin{tikzpicture}
        \node (img)  {\includegraphics[scale=0.225]{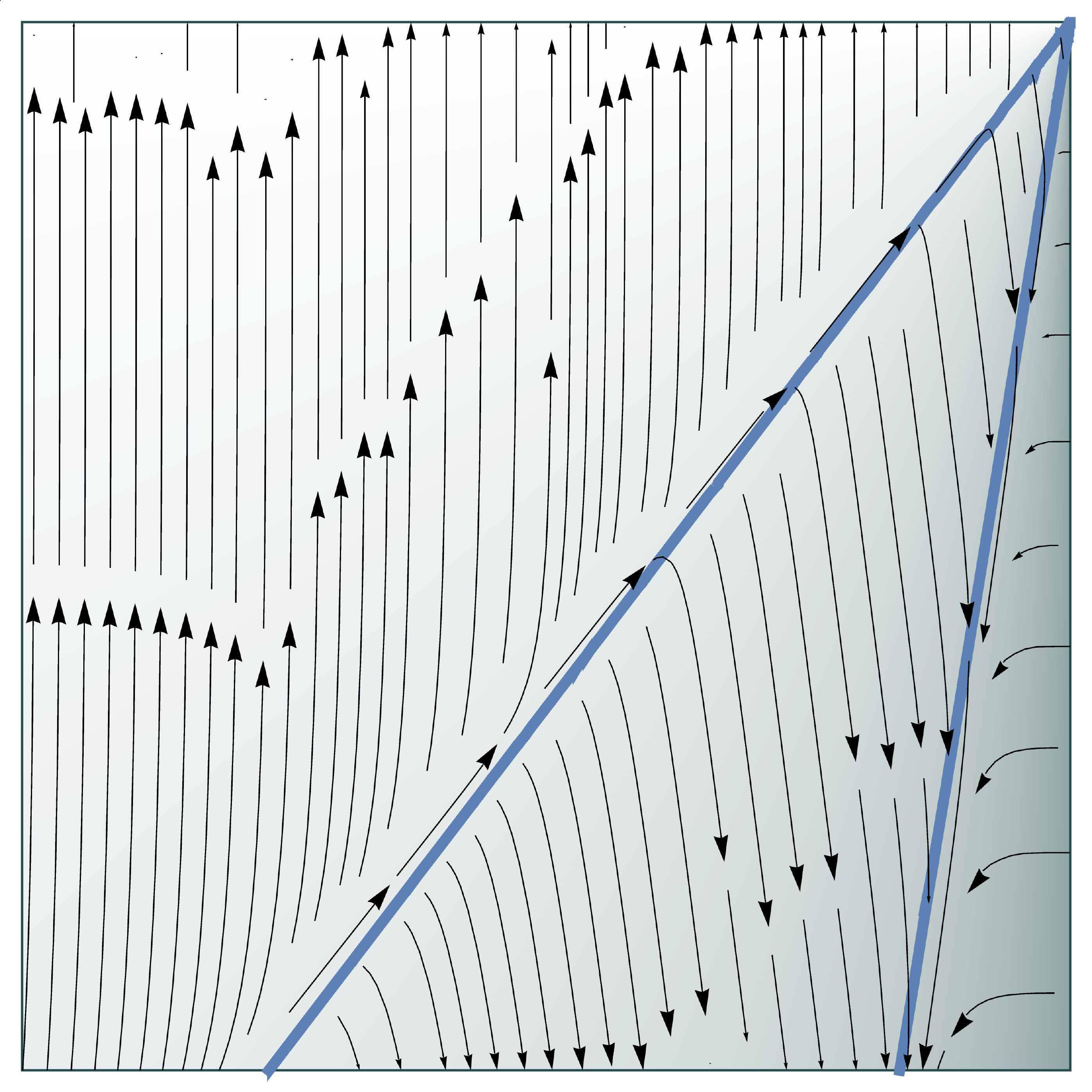}};
        \node[below=of img, node distance=0cm, yshift=1cm] {$k_0$};
        \node[below=of img, node distance=1cm, yshift=1.2cm,xshift=2.1cm] {$y=d_0$};
        \node[below=of img, node distance=1cm, yshift=1.2cm,xshift=-1.7cm] {$y=d_1$};
        \node[left=of img, node distance=0cm, anchor=center,xshift=0.7cm] {$k_1$};
        \end{tikzpicture}
}
\centering\caption{$M < d_1n$}
    \end{subfigure}
    \begin{subfigure}{0.3\textwidth}\label{fig:gradient:M=dn}
    \resizebox{\scaling\textwidth}{\scaling\textwidth}{%
        \begin{tikzpicture}
        \node (img)  {\includegraphics[scale=0.225]{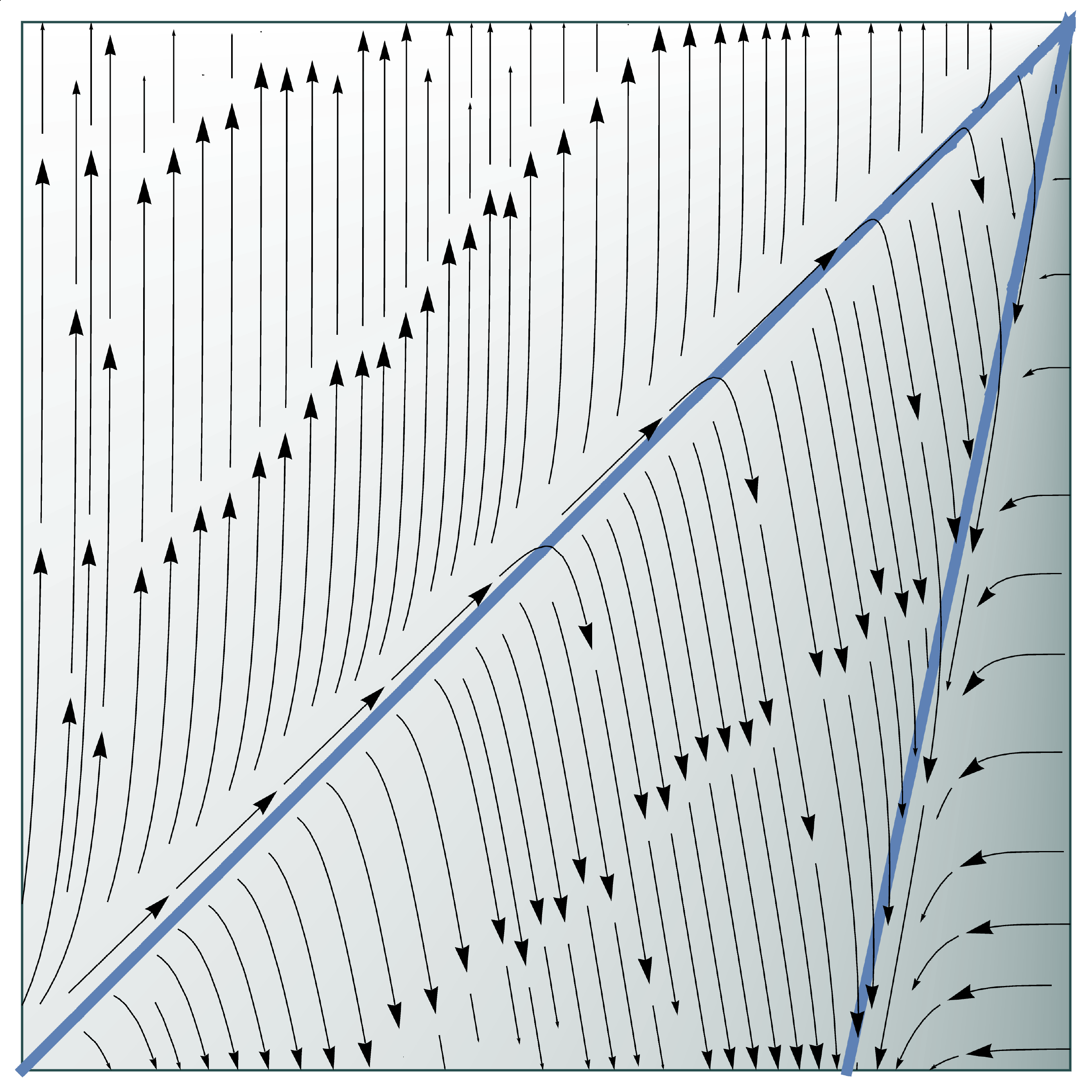}};
        \node[below=of img, node distance=0cm, yshift=1cm] {$k_0$};
        \node[below=of img, node distance=1cm, yshift=1.2cm,xshift=1.84cm] {$y=d_0$};
        \node[below=of img, node distance=1cm, yshift=1.2cm,xshift=-3cm] {$y=d_1$};
        \node[left=of img, node distance=0cm, anchor=center,xshift=0.7cm] {$k_1$};
        \end{tikzpicture}
        }
        \centering\caption{$M = d_1n$}
    \end{subfigure}
    \begin{subfigure}{0.3\textwidth}\label{fig:gradient:M<dan}
    \resizebox{\scaling\textwidth}{\scaling\textwidth}{%
        \begin{tikzpicture}
        \node (img)  {\includegraphics[scale=0.225]{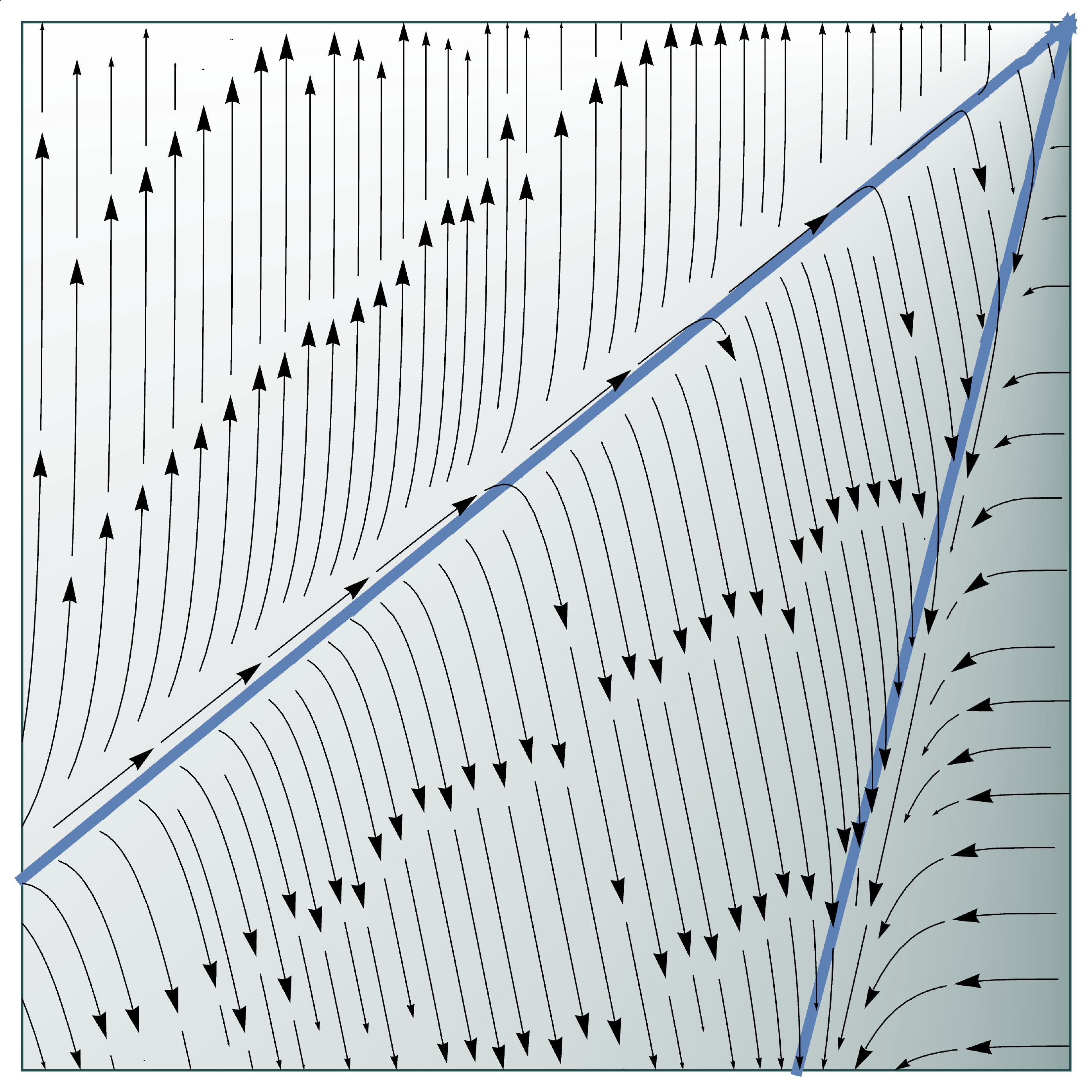}};
        \node[below=of img, node distance=0cm, yshift=1cm] {$k_0$};
        \node[below=of img, node distance=1cm, yshift=1.2cm,xshift=1.5cm] {$y=d_0$};
        \node[left=of img, node distance=0cm, anchor=center,xshift=0.7cm] {$k_1$};
        \node[left=of img, node distance=0cm, rotate=-90, anchor=center, yshift=0.95cm, xshift=2cm] {$y=d_1$};
        \end{tikzpicture}
        }
        \centering\caption{$d_1n < M < d_0n$}
    \end{subfigure}
    \, \vspace{0.5cm} \, 
    
    \begin{subfigure}{0.3\textwidth}\label{fig:gradient:M=dan}
    \resizebox{\scaling\textwidth}{\scaling\textwidth}{%
        \begin{tikzpicture}
        \node (img)  {\includegraphics[scale=0.225]{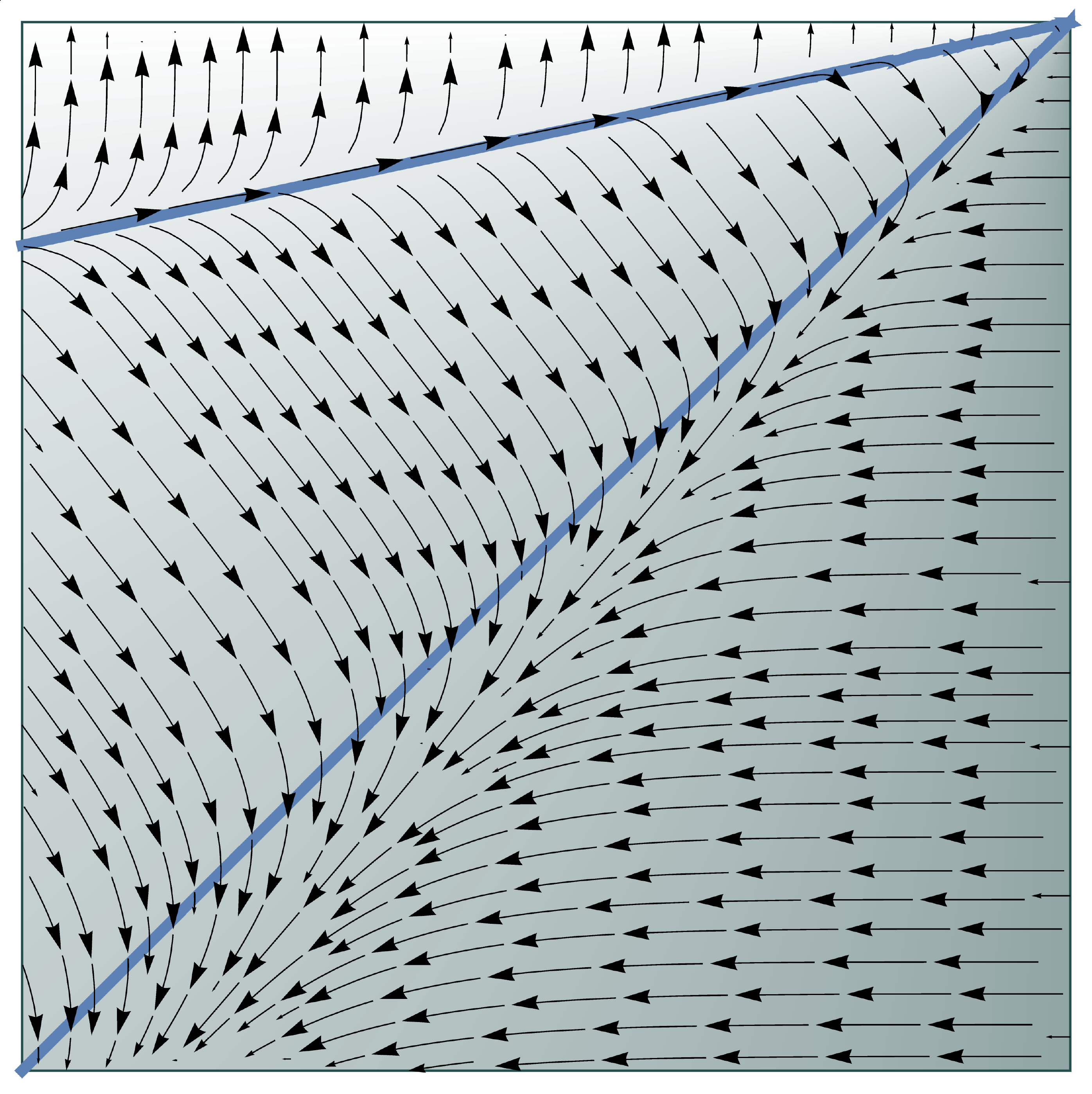}};
        \node[below=of img, node distance=0cm, yshift=1cm] {$k_0$};
        \node[below=of img, node distance=1cm, yshift=1.2cm,xshift=-3cm] {$y=d_0$};
        \node[left=of img, node distance=0cm, anchor=center,xshift=0.7cm] {$k_1$};
        \node[left=of img, node distance=0cm, rotate=-90, anchor=center, yshift=0.95cm, xshift=-1.7cm] {$y=d_1$};
        \end{tikzpicture}
        }
        \centering\caption{$M = d_0n$}
    \end{subfigure}
    \begin{subfigure}{0.3\textwidth}\label{fig:gradient:M>dan}
    \resizebox{\scaling\textwidth}{\scaling\textwidth}{%
        \begin{tikzpicture}
        \node (img)  {\includegraphics[scale=0.225]{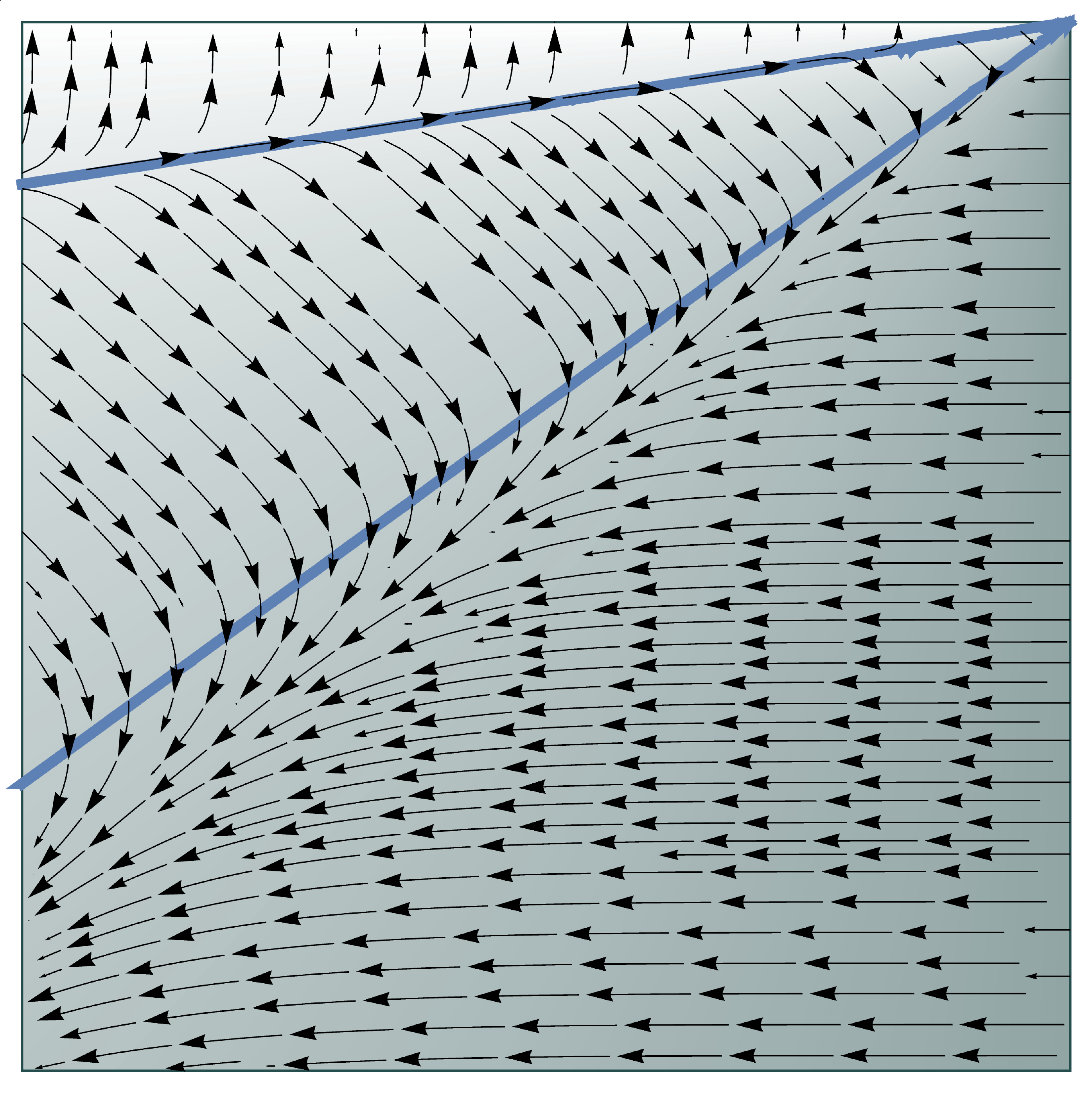}};
        \node[below=of img, node distance=0cm, yshift=1cm] {$k_0$};
        \node[left=of img, node distance=0cm, anchor=center,xshift=0.7cm] {$k_1$};
        \node[left=of img, node distance=0cm, rotate=-90, anchor=center, yshift=0.95cm, xshift=1.5cm] {$y=d_0$};
        \node[left=of img, node distance=0cm, rotate=-90, anchor=center, yshift=0.95cm, xshift=-2.05cm] {$y=d_1$};
        \node[left=of img, node distance=0cm, rotate=-90, anchor=center, yshift=0.95cm, xshift=-2.05cm] {$y=d_1$};
        \end{tikzpicture}
        }
        \centering\caption{$M > d_0n$}
    \end{subfigure}
    \begin{subfigure}{0.3\textwidth}\centering
    \resizebox{0.2\textwidth}{\scaling\textwidth}{%
    \begin{tikzpicture}
        \node (img)  {\includegraphics[scale=0.87]{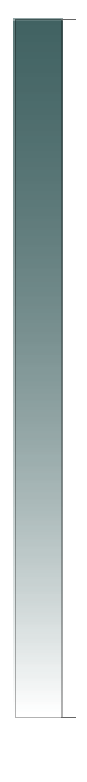}};
        \node[left=of img, node distance=0cm, anchor=center, xshift=0.7cm] {$y$};
        \node[right=of img, node distance=0cm, anchor=center, xshift=-0.9cm, yshift=-2.9cm] {$a$};
        \node[right=of img, node distance=0cm, anchor=center, xshift=-0.9cm, yshift=3.3cm] {$b$};
        \end{tikzpicture}
        }
        \centering\caption{Coloring scale for $y$}
    \end{subfigure}
    \caption{Stream plots of the gradient field of $\brObj(k_0,k_1)$.  Various cases are exhibited that expose different KKT solutions as possible optimal solutions.}\label{fig:gradient}
\end{figure}

\begin{theorem}\label{thm:y=d_0}
    Suppose that $f$ is twice-differentiable, antisymmetric about $c$, and strictly convex on $[0,c)$ and let $\mathbf{k} = (k_0,k_1,k_y,y)$ be an equality-feasible solution of \kspace. We have
        \begin{equation*}
        \pderiv{}{k_0}y \geq 0 \qquad\text{and}\qquad
            \begin{cases}
            \pderiv{}{k_0}\brObj(k_0,k_1) = 0, & \text{if } y = d_0,         \\
            \pderiv{}{k_0}\brObj(k_0,k_1) > 0, & \text{if } y \in (0,d_0),   \\
            \pderiv{}{k_0}\brObj(k_0,k_1) < 0, & \text{if } y \in (d_0,1).
            \end{cases}
        \end{equation*}
\end{theorem}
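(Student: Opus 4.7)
The plan is to compute both partial derivatives directly by using the equality constraints \eqref{constr:kspacen} and \eqref{constr:kspaceM} to write $k_y$ and $y$ as explicit functions of $k_0$ and $k_1$, and then to recognize the resulting expression for $\partial \brObj/\partial k_0$ as the negative of the auxiliary function $g$ from the proof of Lemma~\ref{lem:dr} with $r=0$.

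First, I would solve the equality constraints to get $k_y = n - k_0 - k_1$ and $y = (M-k_1)/(n-k_0-k_1) = (M-k_1)/k_y$. Differentiating the latter with respect to $k_0$ (holding $k_1$ fixed) gives
$$\frac{\partial y}{\partial k_0} \;=\; \frac{M-k_1}{(n-k_0-k_1)^2} \;=\; \frac{y}{k_y},$$
which is nonnegative since $y \in [0,1]$ and $k_y > 0$ whenever the expression for $y$ is well-defined.

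Second, I apply the chain rule to $\brObj(k_0,k_1) = f(0)k_0 + f(1)k_1 + f(y)k_y$, treating $y$ and $k_y$ as functions of $k_0$. Using $\partial k_y/\partial k_0 = -1$ together with $\partial y/\partial k_0 = y/k_y$, the $k_y$ factor cancels neatly:
$$\frac{\partial \brObj}{\partial k_0} \;=\; f(0) + f'(y)\cdot\frac{y}{k_y}\cdot k_y - f(y) \;=\; f(0) + y f'(y) - f(y).$$

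Third, I identify this with the function from Lemma~\ref{lem:dr}. Setting $r=0$ there gives $g(x) = f(x) + f'(x)(0-x) - f(0) = f(x) - xf'(x) - f(0)$, so $\partial \brObj/\partial k_0 = -g(y)$. By Lemma~\ref{lem:dr}, the only roots of $g$ in $[0,1]$ are $y=0$ and $y=d_0$, so $\partial \brObj/\partial k_0 = 0$ exactly when $y = d_0$. To pin down the signs on the two intervals $(0,d_0)$ and $(d_0,1)$, I invoke the sign analysis of $g'(x) = -xf''(x)$ already carried out in the proof of Lemma~\ref{lem:dr}: $g'<0$ on $(0,c)$ and $g'>0$ on $(c,1)$, so $g$ decreases from $g(0)=0$ to $g(c)<0$ and then strictly increases, crossing zero only at $d_0$. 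Hence $g(y)<0$ on $(0,d_0)$ and $g(y)>0$ on $(d_0,1)$, and taking the negative gives the three cases in the theorem.

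The only real step of substance is the cancellation in step two that collapses the chain-rule expression to $f(0) + yf'(y) - f(y)$; once that matches $-g(y)$, the rest is a direct appeal to Lemma~\ref{lem:dr} and its proof, so I do not expect a serious obstacle.
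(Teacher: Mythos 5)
Your proposal is correct and follows essentially the same route as the paper: both compute $\partial y/\partial k_0 = y/k_y$ and collapse $\partial\brObj/\partial k_0$ to $f(0)+yf'(y)-f(y)$, which is exactly $-g(y)$ for the auxiliary function $g$ of Lemma~\ref{lem:dr} with $r=0$. The only cosmetic difference is in pinning down the signs: you reuse the monotonicity analysis of $g'$ from that lemma's proof, whereas the paper evaluates the expression at the test points $y=c$ and $y=2c$ and appeals to the uniqueness of the root $d_0$; both arguments are valid.
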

\begin{proof}
    Recall from equation \eqref{eq:yInTermsOfky} that since $\mathbf{k}$ is equality-feasible, we can treat $y$ and $k_y$ as functions of $k_0$ and $k_1$ from \eqref{constr:kspaceM} and \eqref{constr:kspacen}
        \begin{equation*}
        k_y = n-k_0-k_1 \qquad\textrm{and}\qquad y = \tfrac{M-k_1}{k_y}.
        \end{equation*}
    Clearly $\pderiv{}{k_0}k_y = \pderiv{}{k_0}(n-k_0-k_1) = -1$. It follows that
        \begin{equation*}
            \pderiv{}{k_0}y \quad=\quad \pderiv{}{k_0}\left(\tfrac{M-k_1}{k_y}\right)   
                            \quad=\quad \tfrac{M-k_1}{k_y^2}      
                            \quad=\quad \tfrac{y}{k_y}.
        \end{equation*}
    which is non-negative for any $y \in (0,1)$ and $k_y > 0$. 
    
    Now, notice that the partial derivative
        \begin{align*}
        \pderiv{}{k_0}\brObj(k_0,k_1) &= \pderiv{}{k_0}\left(f(0)k_0+f(1)k_1+f(y)k_y\right)  
            = f(0)+f'(y)\left(\pderiv{}{k_0}y\right)k_y+f(y)\pderiv{}{k_0}k_y                  \\
            &= f(0)+yf'(y)-f(y)
        \end{align*}
    equals zero if $y = 0$ or $y = d_0$. These are the only roots since $d_0$ is unique. First, suppose $y=c$ and notice that $\pderiv{}{k_0}\brObj(k_0,k_1) = f(0)+cf'(c)-f(c)$ is positive by the strict convexity of $f(x)$ over $x\in[0,c)$. Now suppose that $y=\overline{0}=2c$. By the antisymmetry of $f$, we have
        \begin{align*}
        \pderiv{}{k_0}\brObj(k_0,k_1) 
            &= f(0)+2cf'(2c)-f(2c)
            = f(0)+2cf'(0)-\left(2f(c)-f(0)\right) \\
            &= 2\left(f(0)+cf'(0)-f(c)\right)
        \end{align*}
    which is negative by the strict convexity of $f(x)$ over $x\in[0,c)$.
\end{proof}
This theorem implies that: given a solution with $y < d_0$, increasing $k_0$ increases both $y$ and $\brObj(\mathbf{k})$; or given a solution with $y > d_0$, increasing $k_0$ increases $y$ but decreases $\brObj(\mathbf{k})$. In either case, we can perturb $k_0$ to any feasible extent because $y$ will not leave $(0,1)$. We can see this behavior from Figure \ref{fig:gradient} by noticing that the $k_0$ component of each vector points towards the line $y=d_0$. Similarly, the $k_1$ component of each vector points away from the line $y=d_1$ which informs the next theorem.

\begin{theorem}\label{thm:k_1isextreme}
    Suppose that $f$ is twice-differentiable, antisymmetric about $c$, and strictly convex on $[0,c)$ and let $\mathbf{k} = (k_0,k_1,k_y,y)$ be an equality-feasible solution of \eqref{obj:kspace}. We have
        \begin{equation*}
        \pderiv{}{k_1}y \leq 0 \qquad\text{and}\qquad
            \begin{cases}{}
            \pderiv{}{k_1}\brObj(\mathbf{k}) = 0, & \text{if } y = d_1         \\
            \pderiv{}{k_1}\brObj(\mathbf{k}) > 0, & \text{if } y \in (0,d_1)   \\
            \pderiv{}{k_1}\brObj(\mathbf{k}) < 0, & \text{if } y \in (d_1,1).
            \end{cases}
        \end{equation*}
        
\end{theorem}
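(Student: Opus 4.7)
The plan is to mirror the proof of Theorem~\ref{thm:y=d_0} with the roles of $k_0$, $d_0$, and $r=0$ swapped for $k_1$, $d_1$, and $r=1$. From equality-feasibility, $k_y = n - k_0 - k_1$ and $y = (M - k_1)/k_y$, so $\pderiv{}{k_1} k_y = -1$, and the quotient rule (together with $M-k_1 = yk_y$) gives
\[
\pderiv{}{k_1} y \;=\; \frac{-k_y + (M - k_1)}{k_y^{2}} \;=\; \frac{y - 1}{k_y},
\]
which is non-positive since $y \in (0,1)$ and $k_y > 0$. This establishes the claim on $\pderiv{}{k_1} y$.

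Next I would compute the partial derivative of the objective,
\[
\pderiv{}{k_1}\brObj(\mathbf{k}) \;=\; f(1) + f'(y)\,\bigl(\pderiv{}{k_1}y\bigr)\,k_y + f(y)\,\bigl(\pderiv{}{k_1}k_y\bigr) \;=\; f(1) - f(y) - (1-y)f'(y).
\]
Setting this to zero is exactly the defining equation~\eqref{def:dr} of Lemma~\ref{lem:dr} with $r=1$, whose only roots are $y = 1$ and $y = d_1$. Hence, by continuity, the sign of $\pderiv{}{k_1}\brObj(\mathbf{k})$ is constant on each of $(0,d_1)$ and $(d_1,1)$, and it suffices to evaluate at one witness point inside each interval.

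For $y = c \in (d_1,1)$, the expression becomes $f(1) - f(c) - (1-c)f'(c)$, which is strictly negative by first-order strict concavity of $f$ on $[c,1]$ anchored at the tangent point $c$. For $y = \overline{1} = 2c-1 \in (0,d_1)$, antisymmetry gives $f(2c-1) = 2f(c) - f(1)$ and $f'(2c-1) = f'(1)$, so the expression reduces to $2\bigl[f(1) - f(c) - (1-c)f'(1)\bigr]$, which is strictly positive by first-order strict concavity of $f$ on $[c,1]$ anchored at the tangent point $1$.

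The main technical subtlety, mirroring the implicit assumption $c < 1/2$ in the proof of Theorem~\ref{thm:y=d_0} (which uses the witness $\overline{0}=2c$), is that the witness $\overline{1}=2c-1$ lies in $(0,1)$ only when $c > 1/2$; I expect this to be the main obstacle. A clean fallback that avoids witness selection entirely is to define $h(y) := f(1) - f(y) - (1-y)f'(y)$ and observe that $h'(y) = -(1-y)f''(y)$ is negative on $(0,c)$ and positive on $(c,1)$. Thus $h$ is strictly decreasing on $(0,c)$ and strictly increasing on $(c,1)$ with $h(1)=0$ and, by Lemma~\ref{lem:dr}, $d_1 \in (\overline{1},c) \subset (0,c)$; this directly yields $h > 0$ on $(0,d_1)$ and $h < 0$ on $(d_1,1)$ with no restriction on the position of $c$ relative to $1/2$.
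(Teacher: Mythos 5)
Your proposal is correct and is exactly the argument the paper intends: the paper gives no separate proof of Theorem~\ref{thm:k_1isextreme}, stating only that it ``parallels that of Theorem~\ref{thm:y=d_0},'' and your computation of $\pderiv{}{k_1}y = \tfrac{y-1}{k_y}$ and $\pderiv{}{k_1}\brObj = f(1)-f(y)-(1-y)f'(y)$, followed by identifying the roots via Lemma~\ref{lem:dr} with $r=1$, is precisely that mirrored argument. Your monotonicity fallback via $h'(y) = -(1-y)f''(y)$ is a welcome refinement, since the witness point $\overline{1}=2c-1$ (like the paper's own witness $\overline{0}=2c$ in Theorem~\ref{thm:y=d_0}) need not lie in $(0,1)$ for all admissible $c$.
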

The proof of this parallels that of Theorem~\ref{thm:y=d_0}. With the behavior of the gradient now well understood, we can find the optimal solution.

\begin{corollary}[Ordering and feasibility of KKT solutions]\label{thm:continuousoptimal}
Suppose that $f$ is twice-differentiable, antisymmetric about $c$, and strictly convex on $[0,c)$. The optimal solution of the continuous relaxation of \kspace is given by
    \begin{equation*}
    (k_0^*,k_1^*,k_y^*,y^*) = 
        \begin{cases}
        \left(n-\frac{M}{d_0},0,\frac{M}{d_0},d_0\right) &\text{if } M \leq d_0n, \\
        \left(0,0,n,\frac{M}{n}\right) &\text{if } M > d_0n.
        \end{cases}
    \end{equation*}
\end{corollary}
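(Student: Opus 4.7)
My plan is to reduce the problem to comparing the four KKT candidates (\caseZero, \caseTwoA, \caseTwoB, \caseTwoC) from Theorem \ref{thm:KKT-Solutions}. The continuous relaxation of \kspace has a compact feasible region and $\brObj$ is continuous, so a maximum exists and must be a KKT point. I would first record feasibility as a function of $M$: Case \caseTwoB\ requires $k_y = M/d_0 \leq n$, i.e., $M \leq d_0 n$; Case \caseTwoC\ requires $(n-M)/(1-d_1) \leq n$, i.e., $M \geq d_1 n$; and Cases \caseZero\ and \caseTwoA\ are feasible for every $M \in [0,n]$.

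For the regime $M \leq d_0 n$, the key observation is that on the curve $\{y = d_0\}$ in the $(k_0,k_1)$-plane, Theorem \ref{thm:y=d_0} gives $\partial \brObj/\partial k_0 = 0$, while because $d_0 > d_1$ by Lemma \ref{lem:dr}, Theorem \ref{thm:k_1isextreme} gives $\partial \brObj/\partial k_1 < 0$. Thus $\brObj$ is strictly decreasing in $k_1$ along this curve and is maximized at $k_1 = 0$, which is exactly Case \caseTwoB. Global optimality then follows from the same two theorems via a gradient-flow argument: any feasible point can first be moved in $k_0$ to reach $y = d_0$ (using $\partial y/\partial k_0 \geq 0$ and the sign of $\partial \brObj/\partial k_0$) and then along this curve to $k_1 = 0$, with $\brObj$ weakly (and usually strictly) increasing at each step.

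For the regime $M > d_0 n$, Case \caseTwoB\ is infeasible and I claim Case \caseTwoA\ is optimal. At the corner $(k_0,k_1) = (0,0)$, we have $y = M/n > d_0 > d_1$, so both partial derivatives of $\brObj$ are strictly negative by Theorems \ref{thm:y=d_0} and \ref{thm:k_1isextreme}; this makes Case \caseTwoA\ a strict local maximum, and any attempt to leave the corner into the interior strictly decreases $\brObj$. To finish I would eliminate the remaining feasible candidates. Case \caseTwoC\ (if feasible) has $y = d_1$, but since $\partial y/\partial k_1 \leq 0$ and the sign of $\partial \brObj/\partial k_1$ flips from $-$ on the $y > d_1$ side to $+$ on the $y < d_1$ side, Case \caseTwoC\ is actually a local \emph{minimum} in the $k_1$-direction and cannot be the maximum. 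Case \caseZero\ can be dominated by Case \caseTwoA\ via a tangent inequality: by concavity of $f$ on $(c,1]$, $f(1) \leq f(d_0) + f'(d_0)(1 - d_0) = f(0) + f'(d_0)$, so $F_{\caseZero} \leq f(0)n + M f'(d_0)$, which in turn is bounded above by $n f(M/n) = F_{\caseTwoA}$ using concavity of $f$ at $M/n > d_0 > c$ together with the identity $f(d_0) = f(0) + d_0 f'(d_0)$.

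The main obstacle is the $M > d_0 n$ regime, because three KKT candidates can be simultaneously feasible. I expect the crisp resolution is the gradient-sign classification of Case \caseTwoC\ as a local min in $k_1$ together with the tangent-based comparison eliminating Case \caseZero; both leverage the structural analysis already in place via Lemma \ref{lem:dr} and Theorems \ref{thm:y=d_0}--\ref{thm:k_1isextreme}.
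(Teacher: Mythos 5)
Your overall strategy (reduce to the four KKT candidates of Theorem~\ref{thm:KKT-Solutions}, record their feasibility ranges in $M$, and order them using the gradient signs from Theorems~\ref{thm:y=d_0} and~\ref{thm:k_1isextreme}) is exactly the paper's approach, and your treatment of the regime $M\le d_0 n$ and your dismissal of \caseTwoC\ as a local minimum in the $k_1$-direction are both sound. However, your elimination of \caseZero\ in the regime $M>d_0n$ contains a genuine error: the concavity inequality in your final step points the wrong way. Writing $\ell(x)=f(0)+xf'(d_0)$ for the tangent line of $f$ at $d_0$ (which passes through $(0,f(0))$ by the identity $f(d_0)=f(0)+d_0f'(d_0)$), your first inequality correctly gives $\brObj(\mathbf{k}_\caseZero)\le nf(0)+Mf'(d_0)=n\,\ell(M/n)$. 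But concavity of $f$ on $(c,1]$ at the point $M/n>d_0>c$ gives $f(M/n)\le f(d_0)+f'(d_0)\left(\tfrac{M}{n}-d_0\right)=\ell(M/n)$, i.e.\ $\brObj(\mathbf{k}_\caseTwoA)=nf(M/n)\le n\,\ell(M/n)$ as well --- both candidates sit \emph{below} the same tangent-line bound, so the chain is inconclusive and does not establish $\brObj(\mathbf{k}_\caseZero)\le\brObj(\mathbf{k}_\caseTwoA)$. Since $\mathbf{k}_\caseZero$ has $k_y=0$ and sits at a vertex of the feasible region, it cannot be dismissed as ``not a local max'' the way \caseTwoC\ can, so this comparison genuinely has to be carried out.

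Two repairs are available. The paper's own fix stays inside the gradient framework: starting from $\mathbf{k}_\caseTwoA=(0,0,n,M/n)$ with $k_1=0$ fixed, increase $k_0$ from $0$ to $n-M$; since $\pderiv{}{k_0}y\ge 0$, the value of $y$ rises from $M/n>d_0$ to $1$, so $\pderiv{}{k_0}\brObj<0$ throughout by Theorem~\ref{thm:y=d_0}, and the endpoint $(n-M,0,M,1)$ has objective $(n-M)f(0)+Mf(1)=\brObj(\mathbf{k}_\caseZero)$, giving the strict inequality. If you prefer a direct tangent/secant argument, the correct one uses the secant $s(x)=f(0)+x\bigl(f(1)-f(0)\bigr)$ rather than the tangent at $d_0$: note $\brObj(\mathbf{k}_\caseZero)=n\,s(M/n)$, that $s$ has slope $f(1)-f(0)\le f'(d_0)$ (by your own first inequality) so $s\le\ell$ on $[0,1]$ and in particular $s(d_0)\le f(d_0)$, and then concavity of $f$ on $[d_0,1]$ together with $f(1)=s(1)$ forces $f\ge s$ on $[d_0,1]$, hence $\brObj(\mathbf{k}_\caseTwoA)=nf(M/n)\ge n\,s(M/n)=\brObj(\mathbf{k}_\caseZero)$.
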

\begin{proof}
    It suffices to investigate the KKT solutions given in the table in Theorem~\ref{thm:KKT-Solutions}. Thus we consider the following solutions:
    \begin{align*}
        \mathbf{k}_\textbf{\caseZero} &= \begin{bmatrix}
            n-M     & M     & 0  & \sim
            \end{bmatrix} &
        \mathbf{k}_\caseTwoA &= \begin{bmatrix}
            0   & 0  & n   & \frac{M}{n}
            \end{bmatrix} \\
        \mathbf{k}_\caseTwoB &= \begin{bmatrix}
            n-\frac{M}{d_0} & 0    & \frac{M}{d_0} & d_0
            \end{bmatrix} &
        \mathbf{k}_\caseTwoC &= \begin{bmatrix}
            0   & n-\frac{n-M}{1-d_1} & \frac{n-M}{1-d_1} & d_1  
            \end{bmatrix}
    \end{align*}
    Note that the ``$\sim$'' symbol in $\mathbf{k}_\textbf{\caseZero}$ represents an undefined value for $y$. This is because $k_y = 0$ and, by equation \ref{eq:yInTermsOfky}, $y$ is undefined for all such solutions.
    
    First, let's observe the feasibility of all these solutions: $\mathbf{k}_\textbf{\caseZero}$ and $\mathbf{k}_\caseTwoA$ are feasible whenever $M \in [0,n]$ while $\mathbf{k}_\caseTwoB$ is only feasible for $M\in[0,d_0n]$ and $\mathbf{k}_\caseTwoC$ is only feasible for $M\in[d_1n,n]$. 

    Suppose that $M \in [d_1 n, n]$.  
    Notice that 
        $$
\brObj\left(\mathbf{k}_{\caseTwoC}\right) = \brObj\left(0,n-\tfrac{n-M}{1-d_1},\tfrac{n-M}{1-d_1},d_1\right) < \brObj\left(0,M,n-M,y^*\right)
        $$
    by Theorem \ref{thm:k_1isextreme} since $k_0$ is kept constant while $k_1$ is increased. By \eqref{eq:yInTermsOfky}, we must have $y^* = 0$ to maintain equality-feasibility. Thus, in this case, we can exchange $k_y$ for $k_0$ rewriting the latter solution  equivalently as $\mathbf{k}_\caseZero$.
     Thus $\brObj\left(\mathbf{k}_{\caseZero}\right) > \brObj\left(\mathbf{k}_{\caseTwoC}\right)$ over $M\in[d_1n,n]$
    
    It remains only to show that: \eqref{arg:2bDominates} $\mathbf{k}_\caseTwoB$ dominates over $M\in[0,d_0n]$ and \eqref{arg:2aDominates} $\mathbf{k}_\caseTwoA$ dominates over $M \in (d_0n,n]$.

    \begin{enumerate}[I.]
        \item\label{arg:2bDominates}\big[$\mathbf{k}_\caseTwoB$ dominates over $M\in[0,d_0n]$\big]
        \begin{enumerate}[i.]
            \item\label{arg:2bBetterThan2a} 
            \big[$\brObj\left(\mathbf{k}_{\caseTwoB}\right) \geq \brObj\left(\mathbf{k}_{\caseTwoA}\right)$\big]\quad Since $\frac{M}{n} \geq d_0$,
                $$
                \brObj\left(\mathbf{k}_{\caseTwoB}\right) = 
                \brObj\left(n-\tfrac{M}{d_0},0,\tfrac{M}{d_0},d_0\right) \geq \brObj\left(0,0,n,\tfrac{M}{n}\right) = 
                \brObj\left(\mathbf{k}_{\caseTwoA}\right)
                $$
            by Theorem~\ref{thm:y=d_0} since $k_1$ is kept constant while $k_0$ is decreased. (\textit{Note:} the inequality here is strict unless the solutions coincide, which occurs  when $M = d_0n$.)

            \item\label{arg:2bBetterThan0}
            \big[$\brObj\left(\mathbf{k}_{\caseTwoB}\right) > \brObj\left(\mathbf{k}_\caseZero\right)$\big]\quad By Theorem \ref{thm:y=d_0},
                $$
                \brObj\left(\mathbf{k}_{\caseTwoB}\right) = \brObj\left(n-\tfrac{M}{d_0},0,\tfrac{M}{d_0},d_0\right)>
                \brObj\left(n-M,0,M,y^*\right)
                $$
            since $k_1$ is kept constant while $k_0$ is increased and $y^* > d_0$. By \eqref{eq:yInTermsOfky}, we must have $y^* = 1$ to maintain equality-feasibility. Thus, in this case, we can exchange $k_y$ for $k_1$ rewriting the latter solution  equivalently as $\mathbf{k}_\caseZero$.

            \item\label{arg:2bBetterThan2c} 
            \big[$\brObj\left(\mathbf{k}_{\caseTwoB}\right) > \brObj\left(\mathbf{k}_\caseTwoC\right)$ for $M\in[d_1n,d_0n]$\big]\quad Follows from \eqref{arg:2bBetterThan0} since $\brObj\left(\mathbf{k}_{\caseZero}\right) > \brObj\left(\mathbf{k}_{\caseTwoC}\right)$.
        \end{enumerate}

        \item\label{arg:2aDominates}
        \big[$\mathbf{k}_\caseTwoA$ dominates over $M \in (d_0n,n]$\big]
        \begin{enumerate}[i.]
            \item\label{arg:2aBetterThan0}
            \big[$\brObj\left(\mathbf{k}_{\caseTwoA}\right) > \brObj\left(\mathbf{k}_{\caseZero}\right)$\big]\quad Notice that
                $$
                \brObj\left(\mathbf{k}_{\caseTwoA}\right) = 
                \brObj\left(0,0,n,\tfrac{M}{n}\right) > \brObj\left(n-M,0,M,y^*\right)
                $$
            by Theorem \ref{thm:y=d_0} since $\frac{M}{n} > d_0$ and $k_1$ is kept constant while $k_0$ is increased.  By \eqref{eq:yInTermsOfky}, we must have $y^* = 1$ to maintain equality-feasibility. Thus, again, we rewrite the latter solution  equivalently as $\mathbf{k}_\caseZero$.

            \item\label{arg:2aBetterThan2c}
            \big[$\brObj\left(\mathbf{k}_{\caseTwoA}\right) > \brObj\left(\mathbf{k}_{\caseTwoC}\right)$\big]\quad Follows from  \eqref{arg:2aBetterThan0} since $\brObj\left(\mathbf{k}_{\caseZero}\right) > \brObj\left(\mathbf{k}_{\caseTwoC}\right)$.
        \end{enumerate}
    \end{enumerate}
\end{proof}

\subsection{Integer Solutions}%

\begin{corollary}[to Lemma \ref{lem:feasibility}]
\label{lem:kakbintfeasibility}
An equality feasible solution $(k_0,k_1,k_y,y)$ is fully feasible for \kspace  if and only if $k_0$ and $k_1$ are integers such that
\begin{equation*}
\begin{array}{ccccccccccc}
\max\left(    \left\lceil\frac{yn-M}{y}\right\rceil, 0\right) &\leq& k_0 &\leq& \left\lfloor n-M\right\rfloor     &\qquad\textrm{and}\qquad&  \max\left(  \left\lceil\frac{M-yn}{1-y}\right\rceil , 0 \right)&\leq& k_1 &\leq& \left\lfloor M \right\rfloor
\end{array}
\end{equation*}
\end{corollary}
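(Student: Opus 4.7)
The plan is to obtain the stated integer bounds directly from Lemma~\ref{lem:feasibility} by layering the integrality requirement \eqref{constr:kspaceint} onto the continuous characterization. Full feasibility of $(k_0,k_1,k_y,y)$ for \kspace is the conjunction of (i) equality-feasibility (constraints \eqref{constr:kspacen}--\eqref{constr:kspaceM}), (ii) the continuous non-negativity/$y$-bounds that Lemma~\ref{lem:feasibility} repackages as inequalities on $k_0$ and $k_1$, and (iii) the integrality of $k_0,k_1,k_y$. Since (i) is built into the hypothesis of the corollary, only (ii) and (iii) remain.

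First I would invoke Lemma~\ref{lem:feasibility} to reduce (ii) to the continuous bounds $\max\!\bigl(\tfrac{yn-M}{y},0\bigr) \leq k_0 \leq n-M$ and the analogous pair for $k_1$. Then, adding (iii), I would use the elementary fact that for an integer $k$ and real numbers $a \leq b$, the condition $a \leq k \leq b$ is equivalent to $\lceil a\rceil \leq k \leq \lfloor b\rfloor$. Applied coordinatewise to the bounds on $k_0$ and $k_1$, and using that $\lceil\max(a,0)\rceil = \max(\lceil a\rceil,0)$ because $0$ is itself an integer, this produces precisely the claimed ceiling/floor inequalities. The converse direction is immediate: integers satisfying the ceiling/floor bounds satisfy the continuous bounds of Lemma~\ref{lem:feasibility} a fortiori.

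The only loose end is $k_y$, which does not appear in the stated inequalities. Here I would simply observe that $k_y = n - k_0 - k_1$ (from \eqref{constr:kspacen}) is automatically an integer because $n, k_0, k_1 \in \mathbb{Z}$, and that the stated upper bounds give $k_0 + k_1 \leq (n - M) + M = n$, so $k_y \geq 0$ is automatic as well. Thus integrality and non-negativity of $k_y$ come for free from the conditions on $k_0$ and $k_1$.

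There is essentially no substantive obstacle in this proof: the corollary is just the discrete refinement of Lemma~\ref{lem:feasibility} obtained by intersecting the continuous feasible intervals with $\mathbb{Z}$. The only minor bookkeeping is verifying that the outer $\max(\cdot,0)$ can be freely pulled through the ceiling, and that $k_y$'s membership in $\mathbb{Z}_+$ does not need to be enforced separately.
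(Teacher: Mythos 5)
Your proposal is correct and follows the same route as the paper, which simply states that the result follows from Lemma~\ref{lem:feasibility} upon restricting $k_0$, $k_1$, $k_y$ to integers; you have merely supplied the routine details (rounding the continuous bounds, commuting $\max(\cdot,0)$ with the ceiling, and noting that integrality and non-negativity of $k_y$ are inherited). No gaps.
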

The result follows directly from  Lemma \ref{lem:feasibility} when we restrict  $k_0, k_1$ and $k_y$ to be integer.

\begin{theorem}\label{thm:opt_int_sol}
    Suppose that $f$ is twice-differentiable, antisymmetric about $c$, and strictly convex on $[0,c)$. The optimal integer solution of \kspace is given by
        \begin{equation*}
        (k_0^*, k_1^*, k_y^*, y^*) = \begin{cases}
            \text{best of } \mathbf{k}_\caseZeroMinus, \mathbf{k}_\caseTwoBMinus, \mathbf{k}_\caseTwoBPlus & \text{if } M < d_0n,\\
            \mathbf{k}_\caseTwoA & \text{if } M \geq d_0 n.
            \end{cases}
        \end{equation*}
    Where $\mathbf{k}_\caseZeroMinus$, $\mathbf{k}_\caseTwoBMinus$, $\mathbf{k}_\caseTwoBPlus$, and $\mathbf{k}_\caseTwoA$ are defined the following table:
    \[\arraycolsep=5pt\def\arraystretch{1.5}
\begin{array}{c|cccc}
\textbf{Point} \        
& k_0                  & k_1                  & k_y                & y           \\\hline
\mathbf{k}_\caseZeroMinus  & \left\lfloor n-M\right\rfloor    &\left\lfloor M\right\rfloor    & n-k_0-k_1       & M-\left\lfloor M\right\rfloor    \\
\mathbf{k}_\caseTwoA            & 0                                                  & 0     & n                                                 & \frac{M}{n}   \\
\mathbf{k}_\caseTwoBMinus & \left\lfloor n-\frac{M}{d_0}\right\rfloor   & 0     & \left\lceil\frac{M}{d_0}\right\rceil    & \frac{M}{k_y} < d_0         \\
\mathbf{k}_\caseTwoBPlus & \left\lceil n-\frac{M}{d_0}\right\rceil     & 0     & \left\lfloor\frac{M}{d_0}\right\rfloor  & \frac{M}{k_y} > d_0       
\end{array}
\]
Thus, an optimal solution can be computed with a constant number of operations, provided a preprocessing algorithm to determine $d_0$ with sufficient accuracy.
\end{theorem}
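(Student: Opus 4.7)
The plan is to leverage Corollary~\ref{thm:continuousoptimal} (which identifies the continuous-relaxation optimum) together with the partial-derivative sign patterns established in Theorems~\ref{thm:y=d_0} and~\ref{thm:k_1isextreme}, reducing the integer problem to a comparison among a constant number of candidate lattice points. For $M \geq d_0 n$ the continuous optimum $\mathbf{k}_\caseTwoA = (0,0,n,M/n)$ already has $k_0, k_1, k_y \in \mathbb{Z}_+$, so it is integer-feasible and therefore integer-optimal without any rounding.

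For $M < d_0 n$, I would proceed by a two-stage unimodality argument. First fix an integer $k_0$ and regard $k_1$ as a continuous parameter on the equality-feasible slice (on which $k_y = n - k_0 - k_1$ and $y = (M-k_1)/k_y$). Because $\partial_{k_1} y \leq 0$ while Theorem~\ref{thm:k_1isextreme} gives that $\partial_{k_1}\brObj$ changes sign exactly as $y$ crosses $d_1$, the function $\brObj$ is unimodal along the slice with a unique interior \emph{minimum}; hence its maximum over the feasible integer range of $k_1$ must lie at an endpoint, and by Corollary~\ref{lem:kakbintfeasibility} those endpoints are $k_1 = 0$ and $k_1 = \lfloor M\rfloor$. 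Second, for each fixed endpoint value of $k_1$, the analogous argument based on Theorem~\ref{thm:y=d_0} shows that $\brObj$ is quasi-concave in $k_0$ with unique continuous maximizer at $y = d_0$, so its integer maximum is attained at one of the two integers straddling that continuous maximizer. For $k_1 = 0$ the continuous maximizer is $k_0 = n - M/d_0$, and the two straddling integers give precisely $\mathbf{k}_\caseTwoBMinus$ (with $y < d_0$) and $\mathbf{k}_\caseTwoBPlus$ (with $y > d_0$); for $k_1 = \lfloor M\rfloor$, the integer feasibility constraint $k_y \geq 1$ together with the quasi-concavity pins the optimum in that slice at the single corner $\mathbf{k}_\caseZeroMinus$ with $k_y \in \{0,1\}$.

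Once $d_0$ has been preprocessed via Proposition~\ref{prop:Compute_d_0}, constructing and comparing the three candidates requires only floor/ceiling operations and $O(1)$ evaluations of $f$, which establishes the constant-time claim. The main obstacle I anticipate is the slice-wise unimodality step: the sign-change conditions of Theorems~\ref{thm:y=d_0} and~\ref{thm:k_1isextreme} are phrased in terms of $y$, but along the equality-feasible slice $y$ is a non-linear function of $k_1$ (and of $k_0$), so one must verify that a single sign change in $y$ translates into a single sign change along the slice, and that the corresponding extremum is indeed a minimum rather than a maximum. A secondary subtlety arises in the $k_1 = \lfloor M\rfloor$ slice when $M - \lfloor M\rfloor$ is large relative to $d_0$, so that the slice's continuous maximizer could lie strictly inside the integer-feasible range; in that regime I expect a direct cross-slice comparison against $\mathbf{k}_\caseTwoBMinus$ or $\mathbf{k}_\caseTwoBPlus$ via Theorem~\ref{thm:k_1isextreme} will be required to rule out additional candidates.
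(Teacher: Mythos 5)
Your proposal follows essentially the same route as the paper: for $M \geq d_0 n$ the continuous optimum $\mathbf{k}_\caseTwoA$ is already integral, and for $M < d_0 n$ one uses the sign pattern of $\pderiv{}{k_1}\brObj$ about $y=d_1$ (Theorem~\ref{thm:k_1isextreme}) to push $k_1$ to an endpoint of $\{0,\lfloor M\rfloor\}$, then the sign pattern of $\pderiv{}{k_0}\brObj$ about $y=d_0$ (Theorem~\ref{thm:y=d_0}) to round $k_0$ to the integers straddling the continuous maximizer. The one point where you go beyond the paper is the "secondary subtlety" you flag in the $k_1=\lfloor M\rfloor$ slice, and you are right to flag it: when $M-\lfloor M\rfloor > d_0$, the slice's continuous maximizer sits at $k_y=(M-\lfloor M\rfloor)/d_0>1$, strictly inside the feasible range, so $\mathbf{k}_\caseZeroMinus$ is not the best point of that slice. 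The paper's own Case~\ref{arg:yLessd1} glosses over this by invoking Theorem~\ref{thm:y=d_0} with only the hypothesis $y^*<d_0$ at the start of the $k_0$-increase, without checking that $y$ stays below $d_0$ all the way to $k_0=\lfloor n-M\rfloor$. Your anticipated repair is exactly what is needed and does close the gap: any point of that slice whose $y$ exceeds $d_1$ (in particular every point near the slice's interior maximizer, where $y$ is near $d_0>d_1$) is dominated, by Theorem~\ref{thm:k_1isextreme} with $k_1$ decreased to $0$ (which only increases $y$, preserving $\pderiv{}{k_1}\brObj\le 0$), by a $k_1=0$ point and hence by $\mathbf{k}_\caseTwoBMinus$ or $\mathbf{k}_\caseTwoBPlus$; so no candidates beyond the stated three arise. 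Writing out that cross-slice comparison turns your sketch into a complete proof, and in fact yields a slightly more careful argument than the one printed in the paper.
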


\begin{framed}
    \begin{proof}
        According to Corollary \ref{thm:continuousoptimal}, solution $\mathbf{k}_\caseTwoA$ is the optimal solution of the continuous relaxation if $M \geq d_0 n$. Since it is integral, it must also be optimal to the integer problem. Thus, we now assume that $M < d_0 n$ and aim to show that any integer feasible solution $\mathbf{k} = (k_0,k_1,k_y,y)$ is dominated by at least one of $\mathbf{k}_
        \caseZeroMinus$, $\mathbf{k}_\caseTwoBMinus$, and $\mathbf{k}_\caseTwoBPlus$. We start by noticing that there are two cases, \eqref{arg:yLessd1} $y \leq d_1$ and \eqref{arg:yBiggerd1} $y \geq d_1$.
        
        \begin{enumerate}[I.]
            \item\label{arg:yLessd1} \big[$y < d_1$\big] By Theorem \ref{thm:k_1isextreme},
                $
                \brObj(k_0,k_1,k_y,y) < \brObj\left(k_0,\left\lfloor M\right\rfloor,k_y^*,y^*\right)
                $
            since $k_0$ is left constant. Notice that $k_1$ has been increased to its maximum integer-feasible value, so $y^* < y \leq d_1 < d_0$. Thus Theorem \ref{thm:y=d_0} tells us that
                $$
                \brObj\left(k_0,\left\lfloor M\right\rfloor,k_y^*,y^*\right) < 
                \brObj\left(\left\lfloor n-M\right\rfloor,\left\lfloor M\right\rfloor,k_y^\circ,y^\circ\right)
                $$
           where $k_y^\circ$ and $y^\circ$ take the values required to achieve equality feasibility; by equations \ref{constr:kspacen} and \ref{constr:kspaceM} these are $k_y^\circ = n-\left\lfloor n-M\right\rfloor-\left\lfloor M\right\rfloor$ and $y^\circ = M-\left\lfloor M\right\rfloor$. Thus, this last solution is exactly $\mathbf{k}_\caseZeroMinus.$
            
            \item\label{arg:yBiggerd1} \big[$y \geq d_1$\big] By Theorem \ref{thm:k_1isextreme},
                $$
                \brObj(k_0,k_1,k_y,y) < \brObj\left(k_0,0,k_y^*,y^*\right)
                $$
            since $k_0$ is left constant. Notice that $k_1$ is decreased, which means that $y$ has increased, so we could have either \eqref{arg:yStarBiggerd0} $y^* \geq d_0$ or \eqref{arg:yStarLess0} $y^* \leq d_0$.
     \begin{enumerate}[i.]
                \item\label{arg:yStarBiggerd0}
                \big[$y^* \geq d_0$\big] In this case, since $k_1 = 0$ \eqref{constr:kspaceM} gives 
                    \begin{align*}
                    M &= y(n-k_0)
                      \geq d_0(n-k_0)
                    \end{align*}
                which implies $k_0 \geq n-\frac{M}{d_0}$. However, because $k_0 \in \mathbb{Z}$ it must be that $k_0 \geq \left\lceil n-\frac{M}{d_0}\right\rceil$. With this, Theorem \ref{thm:y=d_0} tells us that
                    $
                    \brObj\left(k_0,0,k_y^*,y^*\right) \leq \brObj\left(\left\lceil n-\frac{M}{d_0}\right\rceil,0,k_y^\circ,y^\circ\right),
                    $
                which is exactly $\mathbf{k}_\caseTwoBPlus$.
                
                \item\label{arg:yStarLess0} 
                \big[$y^* \leq d_0$\big] In this case, by similar logic as above, $k_0 \leq \left\lfloor n-\frac{M}{d_0}\right\rfloor$. With this, Theorem \ref{thm:y=d_0} tells us that
                    $
                    \brObj\left(k_0,0,k_y^*,y^*\right) < \brObj\left(\left\lfloor n-\frac{M}{d_0}\right\rfloor,0,k_y^\circ,y^\circ\right),
                    $
                which is exactly $\mathbf{k}_\caseTwoBMinus$.
                \end{enumerate}
        \end{enumerate}
        Finally, by comparing the objective value of these candidates, we can find the optimal solution.
    \end{proof}
\end{framed}

Given an optimal solution $\mathbf{k}^* = (k^*_0,k^*_1,k^*_y,y^*)$ of \kspace, any $\mathbf{x} \in \mathcal{X}(k_0^*,k_1^*)$ is an optimal solution of (\ref{model:01space}). 

\section{Conclusion and Future Work}
In this work, we have presented a solution structure to a specific continuous knapsack problem. We have demonstrated its applications in political redistricting and resource allocation problems where expected payoff ratios need to be maximized. Our results provide insight into the approximate structure of solutions to more complicated versions of the problem.

As for future work, one possible direction is to vary the variable ranges for each $x_i$ to non-uniform bounds $[a_i, b_i]$, which can be more realistic in some applications. Another potential avenue for future research is to study the structure of solutions when additional inequalities are present. Our results suggest that many of the variables will attain their lower bounds due to the convex portion of the objective function. This implies that for a more general problem, the optimal solution is likely to be found on the boundary of a low-dimensional face of the feasible region.

Overall, the solution structure presented in this work provides valuable insights into the behavior of solutions for the continuous knapsack problem. The future work can build upon these insights to address more complex and realistic versions of the problem, and to explore its applications in various domains.

\paragraph{Acknowledgements} We would like to thank Nicholas Goedert for inspiration for this paper.

\printbibliography

@article{HOCHBAUM1995103,
title = {A nonlinear Knapsack problem},
journal = {Operations Research Letters},
volume = {17},
number = {3},
pages = {103-110},
year = {1995},
issn = {0167-6377},
doi = {10.1016/0167-6377(95)00009-9},
url = {https://www.sciencedirect.com/science/article/pii/0167637795000099},
author = {Dorit S. Hochbaum},
keywords = {Convex optimization, Fully polynomial approximation scheme, Knapsack problem, Quadratic Knapsack problem, Allocation problem},
abstract = {The nonlinear Knapsack problem is to minimize a separable concave objective function, subject to a single “packing” constraint, in nonnegative variables. We consider this problem in integer and continuous variables, and also when the packing constraint is convex. Although the nonlinear Knapsack problem appears difficult in comparison with the linear Knapsack problem, we prove that its complexity is similar. We demonstrate for the nonlinear Knapsack problem in n integer variables and knapsack volume limit B, a fully polynomial approximation scheme with running time Õ((1/ε2) (n + 1/ε2)) (omitting polylog terms); and for the continuous case an algorithm delivering an ε-accurate solution in O(n log(B/ε)) operations.}
}

@article{Derman1959,
 ISSN = {00251909, 15265501},
 URL = {http://www.jstor.org/stable/2627147},
 abstract = {The problem studied is that of how to allocate a fixed amount of some resource among various activities where, after a period of time, unused portions of the resource lose their value. Demand for the resource at each station is considered to be random. The problem is a special case of one usually referred to as the distribution of the effort. The question of determining the optimal amount of resource is also discussed.},
 author = {Cyrus Derman},
 journal = {Management Science},
 number = {4},
 pages = {453--459},
 publisher = {INFORMS},
 title = {A Simple Allocation Problem},
 urldate = {2022-10-12},
 volume = {5},
DOI = {10.1287/mnsc.5.4.453},
 year = {1959}
}

@article{Kodialam1998,
 ISSN = {0030364X, 15265463},
 URL = {http://www.jstor.org/stable/222865},
 abstract = {We consider a simple resource allocation problem with a single resource constraint. The objective function is composed of separable, convex performance functions, one for each activity. Likewise, the constraint has separable, convex resource-usage functions, one for each activity. The objective is to minimize the sum of the performance functions, subject to satisfying the resource constraint and nonnegativity constraints. This problem extends the well-studied problem in which the resource constraint is linear. We present several algorithms to solve the problem. These algorithms extend approaches developed for the linearly constrained problem. They can readily solve large problems and find the optimal solution in a number of iterations that does not exceed the number of variables. We provide several examples for illustration purposes, present computational results, and highlight the similarities and differences among the algorithms.},
 author = {Muralidharan S. Kodialam and Hanan Luss},
 journal = {Operations Research},
 number = {2},
 pages = {272--284},
 publisher = {INFORMS},
 title = {Algorithms for Separable Nonlinear Resource Allocation Problems},
 urldate = {2022-10-12},
 volume = {46},
  DOI = {10.1287/opre.46.2.272},
 year = {1998}
}

@article{10.2307/2631335,
 ISSN = {00251909, 15265501},
 URL = {http://www.jstor.org/stable/2631335},
 abstract = {The allocation of a specific amount of a given resource among competitive alternatives can often be modelled as a knapsack problem. This model formulation is extremely efficient because it allows convex cost representation with bounded variables to be solved without great computational efforts. Practical applications of this problem abound in the fields of operations management, finance, manpower planning, marketing, etc. In particular, knapsack problems emerge in hierarchical planning systems when a first level of decisions need to be further allocated among specific activities which have been previously treated in an aggregate way. In this paper we provide a recursive procedure to solve such problems. The method differs from classical optimization algorithms of convex programming in that it determines at each iteration the optimal value of at least one variable. Applications and computational results are presented.},
 author = {Gabriel R. Bitran and Arnoldo C. Hax},
 journal = {Management Science},
 number = {4},
 pages = {431--441},
 publisher = {INFORMS},
 title = {Disaggregation and Resource Allocation Using Convex Knapsack Problems with Bounded Variables},
 urldate = {2022-10-13},
 volume = {27},
 DOI = {10.1287/mnsc.27.4.431},
 year = {1981}
}

@article{travis2008,
  title = {Reserve Design to Maximize Species Persistence},
  volume = {13},
  ISSN = {1573-2967},
  url = {http://dx.doi.org/10.1007/s10666-007-9088-4},
  DOI = {10.1007/s10666-007-9088-4},
  number = {2},
  journal = {Environmental Modeling \& Assessment},
  publisher = {Springer Science and Business Media LLC},
  author = {Haight,  Robert G. and Travis,  Laurel E.},
  year = {2007},
  month = mar,
  pages = {243–253}
}

@article{BRETTHAUER2002505,
title = {A pegging algorithm for the nonlinear resource allocation problem},
journal = {Computers \& Operations Research},
volume = {29},
number = {5},
pages = {505-527},
year = {2002},
issn = {0305-0548},
doi = {10.1016/S0305-0548(00)00089-7},
url = {https://www.sciencedirect.com/science/article/pii/S0305054800000897},
author = {Kurt M. Bretthauer and Bala Shetty},
keywords = {Nonlinear resource allocation problem, Integer programming, Branch and bound},
abstract = {In this paper we present a new algorithm for solving the nonlinear resource allocation problem. The nonlinear resource allocation problem is defined as the minimization of a convex function over a single convex constraint and bounded integer variables. We first present a pegging algorithm for solving the continuous variable problem, and then incorporate the pegging method in a branch and bound algorithm for solving the integer variable problem. We compare the computational performance of the pegging branch and bound algorithm with three other methods: a multiplier search branch and bound algorithm, dynamic programming, and a 0,1 linearization method. The computational results demonstrate that the pegging branch and bound algorithm advances the state of the art in methods for solving the nonlinear resource allocation problem.
Scope and purpose
The nonlinear resource allocation problem (i.e., a knapsack problem where both the objective function and single constraint may be nonlinear) is encountered in a variety of applications, including financial modeling, sampling, production and inventory management, and queueing network models of manufacturing and computer systems. Despite its importance to these and other applications, the nonlinear resource allocation problem has received limited attention in the literature. Therefore, we develop methods for solving continuous and integer variable versions of the problem and report extensive computational testing of the algorithms.}
}

@article{10.2307/171693,
 ISSN = {0030364X, 15265463},
 URL = {http://www.jstor.org/stable/171693},
 abstract = {In this paper we study the nonlinear resource allocation problem, defined as the minimization of a convex function over one convex constraint and bounded integer variables. This problem is encountered in a variety of applications, including capacity planning in manufacturing and computer networks, production planning, capital budgeting, and stratified sampling. Despite its importance to these and other applications, the nonlinear resource allocation problem has received little attention in the literature. Therefore, we develop a branch-and-bound algorithm to solve this class of problems. First we present a general framework for solving the continuous-variable problem. Then we use this framework as the basis for our branch-and-bound method. We also develop reoptimization procedures and a heuristic that significantly improve the performance of the branch-and-bound algorithm. In addition, we show how the algorithm can be modified to solve nonconvex problems so that a concave objective function can be handled. The general algorithm is specialized for the applications mentioned above and computational results are reported for problems with up to 200 integer variables. A computational comparison with a 0, 1 linearization approach is also provided.},
 author = {Kurt M. Bretthauer and Bala Shetty},
 journal = {Operations Research},
 number = {4},
 pages = {670--683},
 publisher = {INFORMS},
 title = {The Nonlinear Resource Allocation Problem},
 urldate = {2022-10-13},
DOI = {10.1287/opre.43.4.670},
 volume = {43},
 year = {1995}
}

@misc{bvap_opt,
howpublished = {\url{https://arxiv.org/abs/2305.17298}},
  author = "Jamie Fravel and Robert Hildebrand and Nicholas Goedert and Laurel Travis and Matthew Pierson",
  title  = "Optimizing Representation in Redistricting: Dual Bounds for Partitioning Problems with Non-Convex Objectives",
  note   = "",
  month  = "",
  year   = "2023",
Eprint = {arXiv:2305.17298},
  annote = ""
}

@article{bvap_southern,
  title = {Black representation and district compactness in Southern congressional districts},
  ISSN = {2156-5511},
  url = {http://dx.doi.org/10.1080/21565503.2024.2331723},
  DOI = {10.1080/21565503.2024.2331723},
  journal = {Politics,  Groups,  and Identities},
  publisher = {Informa UK Limited},
  author = {Goedert,  Nicholas and Hildebrand,  Robert and Pierson,  Matt and Travis,  Laurel and Fravel,  Jamie},
  year = {2024},
  month = apr,
  pages = {1–32}
}

@article{political,
  title = {Asymmetries in Potential for Partisan Gerrymandering},
  ISSN = {1939-9162},
  url = {http://dx.doi.org/10.1111/lsq.12448},
  DOI = {10.1111/lsq.12448},
  journal = {Legislative Studies Quarterly},
  publisher = {Wiley},
  author = {Goedert,  Nicholas and Hildebrand,  Robert and Travis,  Laurel and Pierson,  Matt},
  year = {2024},
  month = feb 
}

@article{PATRIKSSON20081,
title = {A survey on the continuous nonlinear resource allocation problem},
journal = {European Journal of Operational Research},
volume = {185},
number = {1},
pages = {1-46},
year = {2008},
issn = {0377-2217},
doi = {10.1016/j.ejor.2006.12.006},
url = {https://www.sciencedirect.com/science/article/pii/S0377221706012215},
author = {Michael Patriksson},
keywords = {Convex programming, Inventory, Large scale optimization, Logistics, Nonlinear programming},
abstract = {Our problem of interest consists of minimizing a separable, convex and differentiable function over a convex set, defined by bounds on the variables and an explicit constraint described by a separable convex function. Applications are abundant, and vary from equilibrium problems in the engineering and economic sciences, through resource allocation and balancing problems in manufacturing, statistics, military operations research and production and financial economics, to subproblems in algorithms for a variety of more complex optimization models. This paper surveys the history and applications of the problem, as well as algorithmic approaches to its solution. The most common techniques are based on finding the optimal value of the Lagrange multiplier for the explicit constraint, most often through the use of a type of line search procedure. We analyze the most relevant references, especially regarding their originality and numerical findings, summarizing with remarks on possible extensions and future research.}
}

@article{srivastava2014knapsack,
  title={Knapsack problems with sigmoid utilities: Approximation algorithms via hybrid optimization},
  author={Srivastava, Vaibhav and Bullo, Francesco},
  journal={European Journal of Operational Research},
  volume={236},
  number={2},
  pages={488--498},
  year={2014},
doi = {10.1016/j.ejor.2013.12.035},
  publisher={Elsevier}
}

@article{freeland1980s,
  title = {S-Shaped Response Functions: Implications for Decision Models},
  volume = {31},
  ISSN = {1476-9360},
  url = {http://dx.doi.org/10.1057/jors.1980.186},
  DOI = {10.1057/jors.1980.186},
  number = {11},
  journal = {Journal of the Operational Research Society},
  publisher = {Informa UK Limited},
  author = {Freeland,  James R. and Weinberg,  Charles B.},
  year = {1980},
  month = nov,
  pages = {1001–1007}
}

@article{AGRALI2009605,
title = {Solving knapsack problems with {S}-curve return functions},
journal = {European Journal of Operational Research},
volume = {193},
number = {2},
pages = {605-615},
year = {2009},
issn = {0377-2217},
doi = {10.1016/j.ejor.2007.10.060},
url = {https://www.sciencedirect.com/science/article/pii/S0377221707011241},
author = {Semra Ağralı and Joseph Geunes},
keywords = {Non-linear programming, OR in strategic planning, Dynamic programming, Marketing},
abstract = {We consider the allocation of a limited budget to a set of activities or investments in order to maximize return from investment. In a number of practical contexts (e.g., advertising), the return from investment in an activity is effectively modeled using an S-curve, where increasing returns to scale exist at small investment levels, and decreasing returns to scale occur at high investment levels. We demonstrate that the resulting knapsack problem with S-curve return functions is NP-hard, provide a pseudo-polynomial time algorithm for the integer variable version of the problem, and develop efficient solution methods for special cases of the problem. We also discuss a fully-polynomial-time approximation algorithm for the integer variable version of the problem.}
}

@article{patriksson2015algorithms,
  title={Algorithms for the continuous nonlinear resource allocation problem - new implementations and numerical studies},
  author={Patriksson, Michael and Str{\"o}mberg, Christoffer},
  journal={European Journal of Operational Research},
  volume={243},
  number={3},
  pages={703--722},
  year={2015},
 DOI = {10.1016/j.ejor.2015.01.029},
  publisher={Elsevier}
}

@article{zhang2013Newton,
    author = {Zhang, Liping},
    year = {2013},
    month = {01},
    pages = {},
    title = {A {N}ewton-Type Algorithm for Solving Problems of Search Theory},
    volume = {2013},
    journal = {Advances in Operations Research},
    doi = {10.1155/2013/513918}
}

@article{rothkopf1969competetive,
 ISSN = {00251909, 15265501},
 URL = {http://www.jstor.org/stable/2628635},
 abstract = {This paper presents a mathematical model of the bidding process that deals explicitly with the uncertainty of the bidders about the value of the subject of the auction. By dealing with this often ignored factor, it obtains equilibrium strategies and profits as a function of the number of bidders, the relative values they attach to the subject of the auction and their accuracy in estimating it. Models are developed for both highest-bid-wins and lowest-bid-wins auctions.},
 author = {Michael H. Rothkopf},
 journal = {Management Science},
 number = {7},
DOI = {10.1287/mnsc.15.7.362},
 pages = {362--373},
 publisher = {INFORMS},
 title = {A Model of Rational Competitive Bidding},
 urldate = {2023-05-10},
 volume = {15},
 year = {1969}
}

\end{document}